\newtheorem{thm}{Theorem}[section]
\newtheorem{lem}[thm]{Lemma}
\newtheorem{prop}[thm]{Proposition}
\newtheorem{cor}[thm]{Corollary}
\newcommand{\cartesian}[1]{(G_{\omega, #1} \times G_{\omega, #1}, \mu_{\omega, #1} \times \mu_{\omega, #1})}
\newcommand{\transf}[1]{(G_{\omega, #1}, \mu_{\omega, #1})}
\newcommand{\tmap}[1]{T_{\omega, #1}}
\newcommand{\map}[1]{G_{\omega, #1}}
\newcommand{\meas}[1]{\mu_{\omega, #1}}
\newcommand{\R}{\mathbb{R}}
\newcommand{\Z}{\mathbb{Z}} 
\newcommand{\N}{\mathbb{N}}
\newcommand{\T}{\mathbb{T}}
\newcommand{\Pa}{\mathcal{P}}
\newcommand{\Qa}{\mathcal{Q}}
\newcommand{\Pt}{{\mathcal{P}^\twr}}
\newcommand{\twr}{\mathcal{T}}
\newcommand{\twrb}{\twr(F, h)}
\newcommand{\MixColl}{\mathcal{C}_{\text{Mix}}}
\newcommand{\hone}{\hspace{1cm}}
\newcommand{\FP}{\mathcal{P}_f(X)}
\newcommand{\px}{x^\Pa}
\newcommand{\pnx}{x^{\Pa, n}}
\newcommand{\pny}{y^{\Pa, n}}
\newcommand{\TSet}{\mathcal{T}(X)}
\newcommand{\DS}{(X, \mu, \mathcal{E}, T)}
\newcommand{\DSS}{(X, T)}
\begin{document}

\author{Frank Trujillo}
%\address{IMJ-PRG, UP7D\\ 58-56 Avenue de France\\ 75205 Paris Cedex 13\\ France}
%\email{frank.trujillo@imj-prg.fr}
%\title[Smooth, mixing transformations with LB cartesian product]{Smooth, mixing, transformations with loosely Bernoulli cartesian product} %%Title with short title
\date{}

\title{Smooth mixing  transformations with loosely Bernoulli cartesian product}
\maketitle

\begin{abstract}

A zero-entropy system is said to be loosely Bernoulli if it can be induced from an irrational rotation of the circle. We provide a criterion for zero-entropy systems to be loosely Bernoulli that is compatible with mixing. Using these criteria, we show the existence of smooth mixing  zero-entropy loosely Bernoulli transformations whose cartesian product with themselves is loosely Bernoulli.

\end{abstract}

\section{Introduction}

One of the main goals of ergodic theory is to describe invariants and models which classify broad classes of systems with respect to certain equivalence relations. Some of these relations, such as metric isomorphism, may prove too restrictive, whereas some others, such as orbit equivalence, may prove too flexible (recall that by Dye's Theorem \cite{dye_groups_1959} any two ergodic measure preserving transformations on non-atomic spaces are orbit-equivalent). Kakutani equivalence, initially introduced by Kakutani in \cite{kakutani_induced_1943}, can be thought of as an intermediate notion of classification. Two ergodic measure preserving transformations $(X, T)$ and $(Y, S)$ are said to be \textit{Kakutani equivalent} if there exist positive measure sets $A \subset X,$ $B \subset Y$ such that the induced maps $T_A$, $S_B$ are metrically isomorphic. \\ 

Particularly important in the theory of Kakutani is the equivalence class of irrational circle rotations. This class was initially considered by Katok \cite{katok_time_1975} under the name of \textit{standard} or  \textit{loosely Kronecker} transformations, and  independently introduced by Feldman \cite{feldman_newk-automorphisms_1976}, who coined the term \textit{loosely Bernoulli (LB)}  to denote the ergodic automorphisms that are:  Kakutani-equivalent to an irrational circle rotation in the zero entropy case, or to a Bernoulli shift in the positive entropy case. \\

Most of the existing criteria for loose-Bernoullicity rely on the presence of particular periodic approximations for the underlying transformation (a remarkable exception to this is the horocycle flow \cite{ratner_horocycle_1978}). Unfortunately, these approximations do not easily coexist with mixing and some of these criteria directly forbid it (see for example \cite[Proposition 3.5]{katok_combinatorial_2003}). Furthermore, the coexistence of mixing with certain periodic approximations, such as rank one transformations, becomes even more difficult in the smooth setting (see \cite{fayad_rank_2005}). \\
 
In this work we aim to provide criteria, compatible with mixing, to guarantee the loose-Bernoullicity of a zero-entropy system and of its cartesian product with itself. Our criteria will be based on localized periodic approximations inspired by \cite{fayad_smooth_2006}. We then show that these criteria are verified by a class of infinitely differentiable zero-entropy mixing transformations in $\T^3$. These systems are obtained by modifying the class of mixing transformations introduced in \cite{fayad_analytic_2002}.

\subsection{Periodic approximations}

Approximation of measure preserving systems by periodic transformations was first proposed in the forties by Halmos \cite{halmos_approximation_1944} who provided a general framework for this theory and used it to prove the genericity of several ergodic properties. Halmos \cite{halmos_general_1944} successfully applied his method to prove that the set of weak mixing transformations is a $G_\delta$ dense set in the space of measure preserving automorphisms endowed with the weak topology. Shortly after, and by similar methods, Rokhlin \cite{rohlin_general_1948} proved that the set of mixing transformations is of the first category under the same weak topology. \\

Two decades later, the theory of periodic approximations regained strength after the works of Katok and Stepin, \cite{katok_entropy_1967}, \cite{katok_approximation_1966}, \cite{katok_approximations_1967}, \cite{stepin_spectrum_1967}. These works laid out the bases of the so called \textit{method of approximations} and introduced a different point of view on periodic approximations which, in contrast to Halmos' approach, allowed the study of individual automorphisms by approximation. Katok \cite{katok_time_1975} provided periodic approximation criteria for standardicity. He proved that a system with \textit{good cyclic} approximations must be standard and that the existence of \textit{excellent linked approximations of type (h, h + 1)} for a given transformation guarantee the standardicity of its cartesian product with itself. For definitions and proofs of these statements we refer the interested reader to \cite{katok_time_1975}, \cite{katok_combinatorial_2003}.

\subsection{Cartesian products}

Several notions related to periodic approximations, such as rank one, finite rank, local rank one and very weakly Bernoulli, are known to imply loose-Bernoullicity (see \cite{ferenczi_systems_1997}, \cite{ornstein_equivalence_1982}). All of these properties, which attempt to capture precise aspects of the long time behaviour of measure preserving transformations, provide important tools for classification and illustrate the richness of existing phenomena in abstract ergodic theory. \\

Given a measure preserving automorphism satisfying any of the ergodic properties mentioned above, it is not necessarily true that its cartesian product with itself verifies the same property. In fact, Ornstein, Rudolph and Weiss \cite{ornstein_equivalence_1982} constructed a rank one transformation whose product with itself is not LB. Ratner showed that any non trivial transformation $T$ in the horocycle flow is LB \cite{ratner_horocycle_1978} but its cartesian product $T \times T$ is not LB \cite{ratner_cartesian_1979}.\\

Examples of weak mixing, LB, zero-entropy transformations with LB cartesian product with themselves were first proposed by Katok (a detailed account of his construction can be found in \cite{gerber_zero-entropy_1981}). Katok's example was later modified by Gerber \cite{gerber_zero-entropy_1981} who showed the existence of a zero-entropy LB transformation which is mixing of all orders and whose cartesian product with itself is LB. More recently, Gerber and Kunde \cite{gerber_smooth_2018} provided examples of smooth LB transformations whose cartesian product with themselves is LB. The loose-Bernoullicity of their examples relies on a slight modification of the aforementioned loose-Bernoullicity criterion proved by Katok \cite{katok_time_1975}, namely, the existence of excellent linked approximations of type $(h, h + 1)$. As in Katok's case, their criterion also guarantees that the transformation is weak mixing but not mixing (see \cite[Proposition 4.1]{gerber_smooth_2018} for the exact statement of the criterion).
 
\subsection{Plan of the work}

This work is organized as follows. In Section \ref{sc: LB_preliminaries}, we introduce the notations and basic objects we use along the paper. Proposition \ref{thm: LB_process_criterion} is an adaption of the loose-Bernoullicity criterion (Proposition \ref{thm: LB_measure_criterion}) proven by Janvresse and de la Rue in \cite{janvresse_pascal_2004}. \\

In Section \ref{sc: LB_criterion}, we combine Proposition \ref{thm: LB_process_criterion} and the existence of an appropriate sequence of Rokhlin towers to prove a loose-Bernoullicity criterion for ergodic zero-entropy processes (Item 3 in Proposition \ref{thm: LB_criterion_towers}). We stress the fact that although the measures of the towers in the criterion verify certain non-summability condition, their measures are not necessarily bounded from below. As a direct consequence of Proposition \ref{thm: LB_criterion_towers} we deduce, in Theorem \ref{thm: LB_criterion_transformation}, a similar loose-Bernoullicity criterion for ergodic zero-entropy transformations. We then adapt these criteria to obtain analogous results for the cartesian product of weak mixing zero entropy transformations with themselves (Proposition \ref{thm: LB_criterion_towers_product} and Theorem \ref{thm: LB_criterion_transformation_product}). \\

Finally, in Section \ref{sc: LB_construction}, we prove the existence of a family of smooth, mixing, zero-entropy, loosely Bernoulli diffeomorphisms of $\T^3$ whose cartesian product with themselves is loosely Bernoulli (Theorem \ref{thm: mixing_LB_existence}). The Appendix contains technical tools used in the construction of these transformations. 

\section{Preliminaries}
\label{sc: LB_preliminaries}

Throughout this work $(X, \mu, \mathcal{E})$ will denote a Lebesgue measure space and $T$ will denote a zero-entropy measure preserving automorphism on $X$. If there is no risk of confusion we will denote $(X, \mu, \mathcal{E})$ simply by $X$. 

\subsection{Partitions and $\Pa$-names.}

We denote by $\FP$ the set of finite measurable partitions of $X$. We consider a \textit{partition} $\Pa \in \FP$ to be a collection of disjoint sets $\Pa = \{ P_1, \dots, P_m\}$ with a fixed ordering of its elements and whose union is $X$. Given $A \in \mathcal{E}$ the induced partition on A is defined as
\[\Pa|_A = \{ P_i \cap A \mid 1 \leq i \leq m\}.\]
Given two partitions $\mathcal{P} = \{ P_1, \dots, P_m\}, \mathcal{Q} = \{ Q_1, \dots, Q_l\}$ of $X$ we define its cartesian product $\Pa \times \Qa$ as the finite partition of $X \times X$ given by
 \[ \Pa \times \Qa = \{ P_i \times Q_j \mid 1 \leq i \leq m, 1 \leq j \leq l \}\]
and ordered lexicographically. We say that $\Pa$ \textit{refines} $\Qa$ if for every $P \in \Pa$ there exists $Q \in \Qa$ such that $P \subset Q$. We denote this relation by $\Pa \preccurlyeq \Qa$. Whenever $l = m$ we denote
\[ D(\Pa, \Qa) = \sum_{k = 1}^m \mu(P_k \Delta Q_k).\] 
 Given $\Pa \in \FP$ the pair $(T, \Pa)$ is called a \textit{process}. For $x \in X$ we define its $\mathcal{P}$-\textit{name} as the unique vector $\px \in \Z^m$ satisfying for all $i \in \Z$
\[ \px_i = j \hspace{0.5cm}\text{ if and only if }\hspace{0.5cm} T^i(x) \in P_{j},\] 
where $\px_i$ denotes the $i$-th coordinate of $\px$. Recall that whenever $\Pa$ is a \textit{generating partition} of $\mathcal{E}$, i.e. $\mathcal{E}$ is the smallest $T$ invariant $\sigma$-algebra containing $\Pa$, the system $\DSS$ is equivalent (in a measure theoretic sense) to the shift over the space of $\Pa$-names of $X$. Given $n \in \N$ we define the $\mathcal{P}$-$n$-\textit{name} associated to $x$ as the $n$-tuple
\[ \pnx = (\px_i)_{0 \leq i < n}.\]
For a partition with exactly $m$ elements, the $\mathcal{P}$-$n$-names are naturally a subset of the space of \textit{words of length $n$} over the finite alphabet $\{1, 2, \dots, m\}$ which we denote by $W^n_m$. The \textit{length} of a word $w$ is denoted by $|w|$. The \textit{concatenation} of two words $v, w$ is denoted by $v \cdot w$. To simplify the notation we denote the concatenation of $k$ words $v_1, v_2, \dots, v_k$ by $\prod_{i = 1}^k v_i.$ We denote the \textit{Hamming distance} in the space of $n$-words by
\[ d_n(v, w) = \dfrac{1}{n}\#\{ 0 \leq i < n \,\mid\, v_i \neq w_i\}\] 
and the \textit{modified Hamming distance} or \textit{$\overline{f}$ distance} by
\[ \overline{f}_n(v, w) = \dfrac{n - l}{n},\]
where $l$ is the maximum natural number for which there exist sequences 
\[0 \leq i_1 < i_2 < \dots < i_l < n, \hone 0 \leq j_1 < j_2 < \dots < j_l < n,\]
obeying $v_{i_s} = w_{j_s}$ for $s = 1, \dots, l.$ Given $x, y \in X$ we denote
\[ d_n^\Pa(x, y) = d_n(\pnx, \pny), \hone \overline{f}_n^\Pa(x, y) = \overline{f}_n(\pnx, \pny).\] 
For any $W \in \mathcal{E}$ denote
\[ \overline{f}_n^\Pa (W) = \sup_{x, y \in W} \overline{f}^\Pa_n(x, y).\] 

\subsection{Loose-Bernoullicity}
\label{sc: LB}

Since we will only consider zero-entropy transformations we provide a simplified definition of loose-Bernoullicity for this particular case. For a more general definition we refer the reader to \cite{feldman_newk-automorphisms_1976} or \cite{ornstein_equivalence_1982}. In the following, we will refer to a zero-entropy measure preserving transformation $\DS$ equipped with a finite measurable partition $\Pa \in \FP$ as a \textit{zero-entropy process} which, as an abuse of notation, we denote simply by $(T, \Pa)$.\\ 

A zero-entropy process $(T, \Pa)$ is called \textit{loosely Bernoulli} (LB) if for any $\epsilon > 0$ there exists $n_0 \in \N$ such that for all $n > n_0$ there exists $x \in X$ obeying 
\[ \mu\left(\left\{ y \in X \, \Big| \, \overline{f}^\Pa_n(x, y) < \epsilon \right\}\right) > 1 - \epsilon. \]
A zero-entropy automorphism $T$ is said to be \textit{loosely Bernoulli} (LB) if the associated process $(T, \Pa)$ is LB for some generating partition $\Pa$ or, equivalently, if there exists a refining sequence of finite measurable partitions $(\Pa_n )_{n \in \N} \subset \FP$ obeying $\Pa_n \rightarrow \mathcal{E}$ such that $(T, \Pa_n)$ is LB for all $n \in \N$. Recall that $\Pa_n \rightarrow \mathcal{E}$ means that for any $A \in \mathcal{E}$ there exists a sequence $(A_n)_{n \in \N} \subset \mathcal{E}$ such that $A_n$ is $\Pa_n$-measurable and $\mu(A_n \Delta A) \rightarrow 0$. Let us point out that a transformation is LB if and only if for any finite measurable partition $\Pa$ the process $(T, \Pa)$ is LB. For proofs of this facts and a detailed account of LB transformations see \cite{ornstein_equivalence_1982}.\\

The following criterion for loose-Bernoullicity was proved by Janvresse and de la Rue in \cite{janvresse_pascal_2004}.

\begin{prop}
\label{thm: LB_measure_criterion}
Let $(T, \Pa)$ be a zero-entropy process. Suppose that for all $\epsilon > 0$ and for $\mu \times \mu$-a.e. $(x, y) \in X \times X$ there exists $l(x, y) \in \N$ such that
\[ \overline{f}_{l(x, y)}^\Pa(x, y) < \epsilon,\]
then the process $(T, \Pa)$ is loosely Bernoulli.
\end{prop}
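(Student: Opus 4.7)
The plan is to bootstrap the pointwise hypothesis into the uniform-in-$n$ estimate required by the definition of loose Bernoullicity. The bridge is the pointwise ergodic theorem applied to $T \times T$ on $(X \times X, \mu \times \mu)$, combined with a greedy block-matching argument exploiting the concatenation-friendliness of $\overline{f}$. Fix $\epsilon > 0$ and choose $\delta > 0$ so small that $3\delta < \epsilon$ and $\sqrt{2\delta} < \epsilon$. Apply the hypothesis at level $\delta$: for $\mu\times\mu$-a.e.\ $(x,y)$ there exists $l(x,y)$ with $\overline{f}^\Pa_{l(x,y)}(x,y) < \delta$. Setting
\[
E_N = \bigl\{(x,y) \in X\times X : \exists\, 1 \le l \le N,\ \overline{f}_l^\Pa(x,y) < \delta\bigr\},
\]
these sets increase to $X \times X$ modulo a null set, so $N$ may be fixed with $(\mu\times\mu)(E_N) > 1 - \delta^2$.

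Apply the pointwise ergodic theorem to $\mathbf{1}_{E_N}$ under the measure-preserving transformation $T \times T$. The limit $g(x,y) := \lim_{n\to\infty} \tfrac{1}{n}\sum_{i=0}^{n-1} \mathbf{1}_{E_N}(T^i x, T^i y)$ exists a.e.\ and integrates to $(\mu\times\mu)(E_N) > 1 - \delta^2$, so Markov gives $(\mu\times\mu)(\{g < 1-\delta\}) < \delta$. Applying Egorov on $\{g \ge 1-\delta\}$ produces a measurable set $A \subset X\times X$ with $(\mu\times\mu)(A) > 1 - 2\delta$ and an integer $n_0$ such that
\[
\frac{1}{n}\sum_{i=0}^{n-1} \mathbf{1}_{E_N}(T^i x, T^i y) > 1 - 2\delta
\]
for every $n \ge n_0$ and every $(x,y) \in A$. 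A Fubini--Markov step then furnishes $x \in X$ whose $y$-section $A_x = \{y : (x,y) \in A\}$ has measure $\mu(A_x) > 1 - \sqrt{2\delta} > 1 - \epsilon$.

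For this $x$, every $n \ge n_0$, and every $y \in A_x$, I build witnesses for $\overline{f}_n^\Pa(x,y)$ greedily. Set $i := 0$ and maintain two strictly increasing lists of indices in $[0,n)$. While $i < n$: if $(T^i x, T^i y) \in E_N$, choose a minimal $l \le N$ realising the $E_N$ condition, use the definition of $\overline{f}_l$ to extract two increasing sequences in $[i,i+l)$ of length at least $l(1-\delta)$ along which the $\Pa$-names of $x$ and $y$ agree, append them to the global lists, and set $i := i+l$; otherwise mark position $i$ as bad and set $i := i+1$. Jumping past the whole block $[i,i+l)$ after each successful match keeps both global lists strictly increasing, so they are admissible as witnesses for $\overline{f}_n^\Pa(x,y)$. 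Bad positions are confined to $\{0 \le i < n : (T^i x, T^i y) \notin E_N\}$, of cardinality at most $2\delta n$, hence blocks cover at least $(1-2\delta)n$ indices and produce at least $(1-\delta)(1-2\delta)n \ge (1-3\delta)n$ matched pairs. Therefore $\overline{f}_n^\Pa(x,y) \le 3\delta < \epsilon$, which is exactly the estimate demanded by the loose Bernoulli definition.

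The main technical point is the concatenation step: the index sequences extracted independently inside consecutive blocks must fit together into a single valid pair of increasing subsequences for the global $\overline{f}_n$, and this works precisely because the greedy jumps past each entire block after a match instead of interleaving matches from overlapping windows. A secondary subtlety is the passage from pointwise ergodic convergence (which has no uniform rate) to a positive-measure set of uniformly good initial points $x$, which is handled above by Egorov followed by Fubini; no ergodicity of $T\times T$ is invoked since the pointwise ergodic theorem and integration of its limit suffice.
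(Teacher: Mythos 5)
The paper does not actually prove this proposition --- it is quoted from Janvresse and de la Rue --- so there is no internal argument to compare against. Your proof is correct and is essentially the standard argument behind criteria of this type: upgrade the pointwise hypothesis to a set $E_N$ of measure close to $1$, apply the Birkhoff theorem for $T\times T$ (no ergodicity needed, as you note, since only the integral of the limit function is used) together with Egorov to get a uniform visit-frequency estimate on a large set $A$, select a good fibre $A_x$ by Fubini--Markov, and concatenate block matchings greedily. The key point, which you identify correctly, is that $\overline{f}$-matchings extracted from consecutive \emph{disjoint} index blocks splice into a single admissible pair of strictly increasing sequences, so the matched fraction is at least $(1-\delta)(1-2\delta)$.

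The only detail to patch is at the right edge of the window: the last greedy block $[i, i+l)$ with $l\le N$ may overshoot $n$, and matched indices $\ge n$ are not admissible witnesses for $\overline{f}_n^\Pa(x,y)$. This costs at most $N$ matched pairs, so it suffices to require $n_0 > N/\delta$ and to replace the condition $3\delta<\epsilon$ by $4\delta<\epsilon$. With that adjustment the argument is complete.
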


\subsubsection{The $P(\Pa, \alpha, \delta, n)$ property}

Given $\alpha, \delta \in [0, 1)$ and $n \in \N$ we say that the transformation $T$ \text{verifies property} $P(\Pa, \alpha, \delta, n)$ or, equivalently, that the process $(T, \Pa)$ \text{verifies property $P(\alpha, \delta, n)$} if there exists $W \in \mathcal{E}$ obeying
\[\mu(W) > 1 - \delta, \hspace{1cm} \overline{f}_n^\Pa(W) < 1 - \alpha.\]
With these notations, a process $(T, \Pa)$ is loosely Bernoulli if and only if for every $\epsilon > 0$ there exists $n_0 \in \N$ such that for all $n > n_0$ the process verifies property $P(\epsilon, \epsilon, n)$. Proposition \ref{thm: LB_measure_criterion} implies the following criterion for loose-Bernoullicity. 

\begin{prop} 
\label{thm: LB_process_criterion}
Let $(T, \Pa)$ be a zero-entropy process. Suppose there exist sequences $\delta_n \searrow 0$, $\alpha_n \nearrow 1$, $(m_n)_{n \in \N} \subset \N$ such that $(T, \Pa)$ verifies $P(\alpha_n, \delta_n, m_n)$ for all $n \in \N$. Then the process $(T, \Pa)$ is loosely Bernoulli.
\end{prop}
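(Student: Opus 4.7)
The plan is to reduce directly to Proposition \ref{thm: LB_measure_criterion} via a Borel--Cantelli argument. The only subtlety is that the hypothesis gives $\delta_n \searrow 0$ but not necessarily summable, so I would first pass to a subsequence.

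Concretely, since $\delta_n \searrow 0$ I would select indices $n_1 < n_2 < \cdots$ with $\sum_{k \in \N} \delta_{n_k} < \infty$. By hypothesis, for each $k$ there exists a set $W_k \in \mathcal{E}$ such that
\[ \mu(X \setminus W_k) < \delta_{n_k}, \qquad \overline{f}_{m_{n_k}}^\Pa(W_k) < 1 - \alpha_{n_k}. \]
Summability of $\delta_{n_k}$ together with the Borel--Cantelli lemma applied to the sequence $(X \setminus W_k)_{k \in \N}$ gives that the set
\[ X^\ast = \bigcup_{K \in \N} \bigcap_{k \geq K} W_k \]
has full $\mu$-measure, hence $X^\ast \times X^\ast$ has full $\mu \times \mu$-measure.

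Now I would fix $\epsilon > 0$ and an arbitrary pair $(x, y) \in X^\ast \times X^\ast$. By definition of $X^\ast$ there exists $K_1 = K_1(x, y)$ such that $x, y \in W_k$ for every $k \geq K_1$, and since $\alpha_n \nearrow 1$ there exists $K_2$ with $1 - \alpha_{n_k} < \epsilon$ for every $k \geq K_2$. Choosing any $k \geq \max(K_1, K_2)$ and setting $l(x, y) := m_{n_k}$, the inclusion $x, y \in W_k$ and the definition of $\overline{f}_{m_{n_k}}^\Pa(W_k)$ yield
\[ \overline{f}_{l(x,y)}^\Pa(x, y) \leq \overline{f}_{m_{n_k}}^\Pa(W_k) < 1 - \alpha_{n_k} < \epsilon. \]
This verifies the hypothesis of Proposition \ref{thm: LB_measure_criterion} for the process $(T, \Pa)$, and the loose-Bernoullicity conclusion follows.

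Since the deduction is entirely a soft measure-theoretic argument built on the two stated ingredients (a summable-subsequence extraction and Borel--Cantelli), I do not expect a genuine obstacle; the only point to watch is ensuring that the full-measure set on which a good length $l(x,y)$ exists does not depend on $\epsilon$, which is automatic here because $X^\ast$ is defined independently of $\epsilon$ and only the index $k$ used to produce $l(x,y)$ varies with $\epsilon$.
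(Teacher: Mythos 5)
Your argument is correct and follows essentially the same route as the paper: both reduce to Proposition \ref{thm: LB_measure_criterion} by producing a full-measure set of pairs $(x,y)$ on which, for each $\epsilon$, a suitable length $l(x,y)=m_{n}$ can be found. The only (immaterial) difference is that the paper works with $E=\limsup_n (W_n\times W_n)$, which has full measure directly because $(\mu\times\mu)(W_n\times W_n)\to 1$, whereas you pass to a summable subsequence and use Borel--Cantelli to get a $\liminf$ set; both are valid.
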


\begin{proof}
By hypothesis, for all $n \in \N$ there exists $W_n \in \mathcal{E}$ such that
\[\mu(W_n) > 1 - \delta_n, \hspace{1cm} \overline{f}_{m_n}^\Pa(W_n) < 1 - \alpha_n. \]
Define
\[ E := \limsup_{n \in \N} (W_n \times W_n) = \bigcap_{i = 1}^{\infty} \bigcup_{n = i}^{\infty} W_n \times W_n.\]
Clearly $(\mu \times \mu)(E) = 1$. Suppose $\epsilon > 0$ and let $n_{0} \in \N$ satisfy $1 - \alpha_{n_0} < \epsilon$. Given $(x, y) \in E$ there exists $n > n_0$ such that $(x, y) \in W_n \times W_n$. Therefore
\[ \overline{f}_{m_n}^\Pa(x, y) < 1 - \alpha_n < \epsilon.\]
By Proposition \ref{thm: LB_measure_criterion} the process $(T, \Pa)$ is loosely Bernoulli.
\end{proof}

\subsection{Rokhlin towers}

 Given $F \in \mathcal{E}$ and $h \in \N$ we say that the collection 
 \[\twr = \twr(F, h) = \{F, TF,\cdots T^{h - 1}\}\]
 is a \textit{Rokhlin tower} of \textit{height} $h$ if the sets $F, TF,\dots, T^{h - 1}F$ are pairwise disjoint. The sets $T^i F$ are called \textit{levels} of the tower. The first and last elements of the collection are known as \textit{base} and \textit{roof} of the tower. The union of all levels is called the \textit{support} of the tower. By an abuse of notation $\twr$ will denote the support of the tower and also the tower itself. The measure of the support is called the \textit{size} of the tower. We denote by $\TSet$ the set of Rokhlin towers of $X$. Given $\twr \in \TSet$ we denote its height by $h_\twr$ and its size by $\mu_\twr$. \\
 
As we shall see, Rokhlin towers can be used to easily \textit{match} coordinates between two given $\Pa$-names. Given two words $v, w$, not necessarily having the same length, and a natural number $l$, we say we can match at least $l$ coordinates of $v$ and $w$ if there exist sequences
\[ 0 \leq i_1 < i_2 < \dots < i_l \leq |v|, \hspace{1cm} 0 \leq j_1 < j_2 < \dots < j_l \leq |w|,\]
obeying 
\[ v_{i_s} = w_{j_s} \hspace{1cm} \text{for } s = 1, \dots, l. \]
Whenever $|v| = n = |w|$ this is equivalent to say that $\overline{f}_n(v, w) \leq \tfrac{n - l}{n}$. To illustrate how to use Rokhlin towers to match two $\Pa$-$n$-names associated to different points, let $(T, \Pa)$ be a process, $\twrb$ be a Rokhlin tower and consider the finite measurable partition $\mathcal{Q} \in \FP$ given by
 \[ \mathcal{Q} = \twr \cup \Pa|_{\twr^c}.\]
Notice that
 \[x, y \in F \implies x^{\mathcal{Q}, h} = y^{\mathcal{Q}, h}.\] 
 Thus, for any $x, y \in X$, finding iterates 
 \[ 0 \leq i_1 < i_2 < \dots < i_L < n, \hone 0 \leq j_1 < j_2 < \dots < j_L < n\]
for which $T^{i_l}x, T^{j_l} y \in F$, means we can match at least $hL$ coordinates of $x^{\Qa, n}$ and $y^{\Qa, n}.$ Such iterates can be easily found whenever the system possesses some recurrence property. For ergodic systems the relation between $\Pa$-$n$-names and $\Qa$-$n$-names, for sufficiently large values of $n$ (see Lemma \ref{thm: names_and_towers}), is determined by how well the tower $\twr$ approximates the restricted partition $\Pa|_\twr$. Given a partition $\Pa = \{P_1, \dots, P_m\} \in \FP$ and a Rokhlin tower $\twr \in \TSet$ we say that $\twr$ is \textit{$\delta$-monochromatic with respect to $\Pa$} if 
\[ \Delta(\twr, \Pa) = \inf_{\substack {\twr \preccurlyeq \xi, \\ |\xi| = m}} \sum_{k = 1}^{m} \mu(\xi_k \,\setminus\, P_k) < \delta,\] 
where the infimum is taken over all the partitions of the support of the tower with exactly $m$ sets and whose elements are unions of levels of the tower. If 
\[\Delta(\twr, \Pa) = 0,\]
we say that the tower is \textit{monochromatic with respect to $\Pa$}. Let us point out that this is equivalent to \[\twr \preccurlyeq \Pa|_\twr.\] Notice that the infimum in the definition of $\Delta$ is always attained but not necessarily unique. We can provide an explicit expression of a minimizing partition, and thus of $\Delta(\twr, \Pa)$, as follows. Given a tower $\twr = \twrb$ and a partition $\Pa = \{ P_1, \dots, P_m\}$ define for all $0 < k < h$
\[ I^\twr_\Pa(k) = \min \left\{ 0 < j \leq m \,\bigg|\, \mu(T^k(F) \,\setminus\, P_j) = \min_{0 < i \leq m}\mu(T^k(F) \,\setminus\, P_i) \right\}.\]
If there is no risk of confusion we denote $ I^\twr_\Pa(k)$ simply by $I(k)$. Let
\[ P^\twr_j = \bigcup_{I(k) = j} T^k F \]
for all $0 < j \leq m$ and define $\Pt = \{ P^\twr_1, \dots, P^\twr_m\}$. Denote
\begin{equation}
\label{eq: difference_set}
D^\twr_\Pa = \bigcup_{k = 1}^h T^k(F) \,\setminus\, P_{I(k)}.
\end{equation}
The set $D^\twr_\Pa$ is called the \textit{difference set} of $\twr$ and $\Pa$. By construction
\[\Delta(\twr, \Pa) = \sum_{k = 1}^{m} \mu(P^\twr_k \,\setminus\, P_k) = \mu(D^\twr_\Pa).\] Notice that for all $x, y \in X \,\setminus\, D^\twr_\Pa$ 
\[ x^\Qa_0 = y^\Qa_0 \implies x^\Pa_0 = y^\Pa_0. \]
By applying the pointwise ergodic Theorem to $D^\twr_\Pa$ we obtain the following. 

\begin{lem}
\label{thm: names_and_towers}
Let $(T, \Pa)$ be an ergodic process. Suppose $\twr \in \TSet$ and denote 
\[ \Qa = \twr \cup \Pa|_{\twr^c}.\]
Given $\delta > 0$ there exists $n_0 \in \N$ and $W \in \mathcal{E}$ with $\mu(W) > 1 - \delta$ such that
\[d_n^\Pa(x, y) \leq d_n^\Qa(x, y) + 3\Delta(\twr, \Pa)\]
for all $n > n_0$ and for all $x, y \in W.$
\end{lem}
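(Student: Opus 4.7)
The plan is to reduce the bound to a pointwise comparison of $\Pa$- and $\Qa$-names at each iterate, and then control the bad iterates by the pointwise ergodic theorem applied to the difference set $D^\twr_\Pa$.

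First I would establish the key pointwise implication: for any $i \geq 0$ and any $x, y \in X$, if $x^\Qa_i = y^\Qa_i$ and both $T^i x$ and $T^i y$ lie outside $D^\twr_\Pa$, then $x^\Pa_i = y^\Pa_i$. Indeed, having the same $\Qa$-symbol forces either that both $T^i x$ and $T^i y$ lie in the same level $T^k F$ of the tower (in which case being outside $D^\twr_\Pa$ places them both in $P_{I(k)}$), or that they lie in the same atom $P_j \cap \twr^c$ (in which case both lie in $P_j$). Either way their $\Pa$-symbols agree, exactly as was already observed for the zeroth coordinate in the paragraph preceding the statement.

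Next I would turn this into a counting inequality. The set of disagreement indices satisfies
\[
\{0 \leq i < n : x^\Pa_i \neq y^\Pa_i\} \subseteq \{i : x^\Qa_i \neq y^\Qa_i\} \cup \{i : T^i x \in D^\twr_\Pa\} \cup \{i : T^i y \in D^\twr_\Pa\},
\]
which yields the pointwise bound
\[
d_n^\Pa(x, y) \leq d_n^\Qa(x, y) + \frac{1}{n}N_n(x) + \frac{1}{n}N_n(y),
\]
where $N_n(z) := \#\{0 \leq i < n : T^i z \in D^\twr_\Pa\}$. This holds for every $x, y \in X$ without any ergodic input.

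Finally I would invoke the pointwise ergodic theorem applied to $\chi_{D^\twr_\Pa}$: for $\mu$-a.e.\ $z \in X$, $\tfrac{1}{n}N_n(z) \to \mu(D^\twr_\Pa) = \Delta(\twr, \Pa)$. By Egorov's theorem, this convergence is uniform on some set $W \in \mathcal{E}$ with $\mu(W) > 1 - \delta$, so there exists $n_0 \in \N$ with $\tfrac{1}{n}N_n(z) \leq \tfrac{3}{2}\Delta(\twr, \Pa)$ for all $z \in W$ and $n > n_0$ (choosing the tolerance to be $\tfrac{1}{2}\Delta(\twr, \Pa)$ if $\Delta(\twr, \Pa) > 0$; the case $\Delta(\twr, \Pa) = 0$ is even easier since then the orbit of $\mu$-a.e.\ point avoids $D^\twr_\Pa$). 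Summing the two contributions for $x, y \in W$ yields the claimed bound. No real obstacle arises; the only mildly subtle point is keeping track of the constants to land on the factor $3$, which is exactly what the $\tfrac{3}{2}$ uniform bound on each point delivers.
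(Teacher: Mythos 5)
Your proof is correct and follows essentially the same route as the paper: the same inclusion of disagreement indices into $\Qa$-disagreements plus visits of $x$ and $y$ to the difference set $D^\twr_\Pa$, followed by the pointwise ergodic theorem giving the uniform $\tfrac{3}{2}\Delta(\twr,\Pa)$ frequency bound on a set of measure $>1-\delta$ (your explicit appeal to Egorov is just the paper's implicit choice of $n_0$). No gaps.
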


\begin{proof}
If $\Delta(\twr, \Pa) = 0$ then $ \twr \preccurlyeq \Pa|_\twr $ which yields to $d_n^\Pa(x, y) = d_n^\Qa(x, y)$ for all $n \in \N$ and all $x, y \in X$. Otherwise, assume $\Delta(\twr, \Pa) > 0$. Denote by $D_\Pa^\twr$ the difference set defined of $\twr$ and $\Pa$ as defined in (\ref{eq: difference_set}). By ergodicity, there exists $n_0 \in \N$ so that the measure of the set $W$ of points in $X$ satisfying 
\[ \#\left\{ 0 \leq i < n \,\mid\, T^i x \in D_\Pa^\twr \right\} \leq \frac{3}{2}\mu(D^\twr_\Pa)n = \frac{3}{2}\Delta(\twr, \Pa)n,\]
 for all $n > n_0$, has measure at least $1 - \delta$. Hence, for all $n > n_0$ and all $x, y \in W$
\begin{align*}
n (d_n^\Pa(x, y) - d_n^\Qa(x, y)) \leq & \#\left\{ 0 \leq i < n \,\mid\, x^\Qa_i = y^\Qa_i \text{ and } x^\Pa_i \neq y^\Pa_i \right\} \\
\leq & \#\left\{ 0 \leq i < n \,\mid\, T^i x \in D_\Pa^\twr \text{ or } T^i y \in D_\Pa^\twr \right\} \\
\leq & 3\Delta(\twr, \Pa)n
\end{align*}
\end{proof}

\section{Loose-Bernoullicity criteria}
\label{sc: LB_criterion}

\subsection{Main Proposition}

The following result is at the core of this work. It embodies the idea of using Rokhlin towers to improve existent bounds on the $\overline{f}$ distance associated to a given finite measurable partition. More precisely, given an ergodic process $(T, \Pa)$, a natural number $n$ and a positive measure set $W \in \mathcal{E}$, the presence of a sufficiently high, large and monochromatic Rokhlin tower implies the existence of a set $W_+$, whose measure can be taken as close to one as desired, such that
\[ \overline{f}^\Pa_m(W_+) < \overline{f}^\Pa_n(W) \]
for sufficiently large $m$. Proposition \ref{thm: LB_prop} provides precise estimates on $\overline{f}_m(W_+)$. Recall that for any $W \in \mathcal{E}$ we denote
\[ \overline{f}_n^\Pa (W) = \sup_{x, y \in W} \overline{f}^\Pa_n(x, y)\] 
and that a process $(T, \Pa)$ is said to verify property $P(\alpha, \delta, n)$ if there exists $W \in \mathcal{E}$ such that
\[ \mu(W) > 1 - \delta, \hone \overline{f}_n(W) < 1 - \alpha. \]
The following proof is inspired on Ferenczi's argument in \cite[Proposition 1]{ferenczi_systemes_1984}.
\begin{prop}
\label{thm: LB_prop}
Let $(T, \Pa)$ be an ergodic process verifying $P(\alpha, \delta, N)$ for some $\alpha \geq 0,$ $\delta > 0$ and $N \in \N$. Suppose there exists a Rokhlin tower $\twr \in \TSet$ obeying
\begin{equation*}
\label{eq: height_condition}
 h_\twr > \dfrac{2N}{\delta}.
 \end{equation*}
Then for any $\delta_+ > 0$ there exists $N_+ \in \N$ such that the process $(T, \Pa)$ verifies $P(\alpha_+, \delta_+, N_+)$ where
\begin{equation} 
\label{eq: new_alpha}
\alpha_+ = \alpha + \frac{\mu_\twr^2}{10}(1 - \alpha)^2 - 6\delta - 3\Delta(\twr, \Pa).
\end{equation}
\end{prop}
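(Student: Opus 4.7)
The plan is to pass from $\overline{f}^\Pa$ to $\overline{f}^\Qa$ for the refined partition $\Qa := \twr \cup \Pa|_{\twr^c}$ (the argument of Lemma \ref{thm: names_and_towers} works verbatim with $\overline{f}$ in place of $d$), which will account for the additive $-3\Delta(\twr, \Pa)$ term in (\ref{eq: new_alpha}). The advantage of $\Qa$ is that every visit of an orbit to the base $F$ triggers a deterministic length-$h_\twr$ block of the $\Qa$-name in which the atom $\twr$ is repeated $h_\twr$ times, so two tower passes automatically produce identical sub-words of that length. This is the ingredient that will be used to improve the matching beyond the baseline supplied by $P(\alpha, \delta, N)$.

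First I would fix $N_+$ very large, with $N_+ \gg h_\twr \gg N$ (the latter gap being permitted by the hypothesis $h_\twr > 2N/\delta$), and apply the pointwise ergodic Theorem to the four sets $W$, $\twr$, $F$, and the difference set $D^\twr_\Pa$. This defines a typical set $W_+ \subset X$ with $\mu(W_+) > 1 - \delta_+$ along whose elements $x$ all four frequencies in $[0, N_+)$ are $\delta$-close to the respective measures. The $-6\delta$ term in (\ref{eq: new_alpha}) will come from summing the exceptional-set contributions over $x$ and $y$.

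Given $x, y \in W_+$, I would build a monotone matching of their $\Qa$-$N_+$-names in two stages. The baseline stage partitions $[0, N_+)$ into length-$N$ windows and applies $P(\alpha, \delta, N)$ to those pairs of windows whose starting iterates land simultaneously in $W$, producing matches of size at least $\alpha N$ per usable window. The augmentation stage then examines positions of $[0, N_+)$ not yet consumed by the baseline match: on those, the iterates of $x$ and $y$ still visit the tower with frequency approximately $\mu_\twr$ each (by our choice of $W_+$ and the bound $h_\twr > 2N/\delta$, which guarantees that an entire tower pass fits comfortably inside the unmatched region), and the constant-$\twr$ structure of tower passes allows these visits to be paired off monotonically. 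The quadratic factor $\mu_\twr^2 (1-\alpha)^2$ in (\ref{eq: new_alpha}) arises from the joint density of still-unmatched tower visits on both sides, while the numerical constant $\tfrac{1}{10}$ is a combinatorial loss from enforcing monotone alignment across the two layers, in the spirit of Ferenczi's argument in \cite{ferenczi_systemes_1984}.

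The main obstacle is the bookkeeping: one must certify that the tower augmentations can be inserted into the baseline matching without violating monotonicity in either coordinate, and that the ergodic-theoretic errors incurred along the way remain absorbed into the $-6\delta - 3\Delta(\twr, \Pa)$ correction. Once a common subsequence of length at least $\alpha_+ N_+$ in the $\Qa$-$N_+$-names of $x$ and $y$ has been produced, the desired bound $\overline{f}^\Pa_{N_+}(W_+) < 1 - \alpha_+$ follows by pulling back through the $\overline{f}$-version of Lemma \ref{thm: names_and_towers}, completing the verification that $(T, \Pa)$ satisfies $P(\alpha_+, \delta_+, N_+)$.
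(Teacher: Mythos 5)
Your ingredients are the right ones (tower passes give free length-$h_\twr$ matches of $\Qa$-names, $P(\alpha,\delta,N)$ gives a baseline, and the $\overline{f}$-version of Lemma \ref{thm: names_and_towers} absorbs the $3\Delta(\twr,\Pa)$ correction -- that last reduction is indeed valid), but the architecture of your matching is inverted, and the step you defer as ``bookkeeping'' is the actual crux of the proof. Your augmentation stage does not work as described: a tower pass is a \emph{contiguous} block of $h_\twr$ positions, and since $h_\twr > 2N/\delta \gg N$, while the baseline stage matches a positive fraction of essentially every length-$N$ window, there is no contiguous stretch of $h_\twr$ unmatched positions anywhere in $[0,N_+)$. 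So a tower pass never ``fits inside the unmatched region''; every insertion of a tower match conflicts with baseline matches and forces deletions. Each inserted pair of tower blocks can force the deletion of up to roughly $2\alpha h_\twr$ previously placed matches (those inside either block that were matched to positions outside the partner block), so the net gain per insertion can be as bad as $h_\twr(1-2\alpha)$, which is \emph{negative} for $\alpha>1/2$. Without a mechanism that limits how many tower visits you use and controls the offsets between the chosen visits of $x$ and of $y$, the argument does not produce any gain for $\alpha$ close to $1$, which is exactly the regime needed to iterate the proposition.

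The paper resolves this by building the matching in the opposite order. It first chooses the tower blocks -- only $L$ of them, with $hL \asymp c\,\mu_\twr^2 n$ for a tunable parameter $c=\min\{9/10,(1-\alpha)/\alpha\}$, and with the visits of $x$ and $y$ to $F$ paired so that the total length discrepancy between corresponding complementary (``odd'') blocks is at most $chL+\delta n$ (Claim 1) -- and only then applies $P(\alpha,\delta,N)$ to the odd blocks (Claim 2). With this order there is no conflict to resolve: the even blocks contribute $hL-3\Delta(\twr,\Pa)n$ matches outright, and the controlled discrepancy bounds the loss in the odd-block matching by roughly $\alpha(1+\tfrac{c}{2})hL$, so the net gain is $\tfrac{hL}{n}\bigl(1-\alpha(1+\tfrac{c}{2})\bigr)\ge\tfrac{\mu_\twr^2}{10}(1-\alpha)^2$ after optimizing $c$. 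That optimization is precisely where the quadratic factor $(1-\alpha)^2$ and the constant $\tfrac1{10}$ in (\ref{eq: new_alpha}) come from; your proposal asserts this factor but has no route to derive it. To repair your write-up you would need to (i) reverse the two stages, (ii) introduce the density parameter $c$ and prove the analogue of Claim 1 controlling $\sum_k\bigl||v^{2k+1}|-|w^{2k+1}|\bigr|$, and (iii) carry out the loss accounting of Claim 2 -- at which point you have reproduced the paper's proof.
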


\begin{proof} Fix $0 < c < 1$. Let
 \[\twr_{c} = \twr(F, \lceil ch \rceil - 1)\]
 and let us denote its size by $\mu_c$. Let $N_0$ sufficiently large so that the set $W_0$ of points in $X$ satisfying
\[ \# \{ 0 \leq i < n \,\mid\, T^ix \in \twr_c\} \geq\tfrac{3}{4}\mu_c n, \hone \text{ for all } n \geq N_0,\]
has measure at least $1 - \frac{\delta_+}{2}$. Denote by $W_1$ the set given by property $P(\alpha, \delta, N)$. We will suppose WLOG that $\delta_+ < 4\delta < \frac{1}{10}.$ Let $N_+$ sufficiently large so that
\[ N_+ > \max\left\{ N, \tfrac{N_0}{\delta_+}\right\}\]
and the set $W$ of points in $X$ satisfying for all $n > \frac{N_+}{2}$
\begin{enumerate}
	\item $ \# \{ 0 \leq i < n \,\mid\, T^ix \in W_0\} \geq (1 - \delta_+)n,$
	\item $ \# \{ 0 \leq i < n \,\mid\, T^ix \in W_1\} \geq (1 - 2\delta)n,$
	\item $ \# \{ 0 \leq i < n \,\mid\, T^ix \in F\} \geq \tfrac{3\mu_\twr}{4h}n,$
\end{enumerate}
has measure at least $1 - \delta_+$. Both $N_0$ and $N_+$ are well defined by the pointwise ergodic Theorem. Fix $x, y \in W,$ $n \geq N_+$ and denote $v = x^{\Pa, n},$ $w = y^{\Pa, n}$. To prove the result it suffices to show 
\[ \overline{f}_n (v, w) < 1 - \alpha_+.\]
For an appropriate choice of $c$ depending only on $\alpha$ (see equation (\ref{cdef})) the Proposition will follow from the following Claims. \\
 
Using the recurrence of $x$ and $y$ to $\twr_c$ one can prove the following: \\

\textbf{Claim 1.} There exists a natural number $L$ and splittings $v = v^1 \cdot \dots \cdot v^{2L + 1}$, $w = w^1 \cdot \dots \cdot w^{2L + 1}$ such that: 
\begin{enumerate}
	\item $\frac{nc}{2}\mu_\twr^2 \leq hL \leq nc\mu_\twr^2$.
	\item $|v^{2k}| = h = |w^{2k}|$ for all $k = 1, \dots, L$. 
	\item $v^{2k}_0, w^{2k}_0 \in F$ for all $k = 1, \dots, L$. 
	\item $\sum_{k = 0}^L \left||v^{2k + 1}| - |w^{2k + 1}|\right| \leq chL + \delta n$. \\
\end{enumerate} 

Using the recurrence of $x$ and $y$ to the set $W_1$ given by $P(\alpha, \delta, N)$ and the fact that most of the odd blocks will be sufficiently big (condition (1) in Claim 1) and have comparable size (condition (4) in Claim 1) one can prove the following:\\

\textbf{Claim 2.} There exist partial couplings between the odd blocks of the splittings of $v, w$ in Claim 1 allowing to match at least
\[ n\alpha\left( 1 - \frac{Lh}{n}\left(1 + \dfrac{c}{2}\right) - 6\delta \right)\]
coordinates of $v$ and $w$. \\

Before proving Claims 1 and 2 let us who how they imply the result. Let
\[ \Qa = \twr \cup \Pa|_{\twr^c}.\]
By Lemma \ref{thm: names_and_towers} we can suppose WLOG that \[d_{n}^\Pa(x, y) < d_{n}^\Qa(x, y) + 3\Delta(\twr, \Pa).\]
Since any two points in the floor set $F$ of the tower define the same $\Qa$-$h$-name and by conditions (2), (3) in Claim 1 we have
\[\sum_{k = 1}^L \#\left\{0 \leq i < h \,\big|\, v_i^{2k} \neq w_i^{2k} \right\} \leq 3\Delta(\twr, \Pa)n.\]
Thus we can match at least 
\[hL - 3\Delta(\twr, \Pa)n \]
coordinates of the even blocks associated to the splittings of $v, w$. Therefore, by Claims 1 and 2 we can match at least 
\[ n\left( \alpha + \frac{hL}{n}\left( 1-\alpha\left(1 + \dfrac{c}{2}\right)\right) - 6\delta - 3\Delta(\twr, \Pa)\right)\]
coordinates of $v$ and $w$. Hence 
\[ \overline{f}_n (v, w) \leq 1 - \alpha - \frac{hL}{n}\left( 1-\alpha\left(1 + \dfrac{c}{2}\right)\right) + 6\delta + 3\Delta(\twr, \Pa).\]
 Therefore for
\begin{equation}
\label{cdef}
c = \min\left\{\dfrac{9}{10}, \dfrac{1 - \alpha}{\alpha}\right\}
\end{equation} 
the result follows. In fact, with this definition of $c$ and by (1) in Claim 1
\[ \frac{hL}{n}\left( 1-\alpha\left(1 + \dfrac{c}{2}\right)\right) \geq \frac{\mu_\twr^2}{10}(1 - \alpha)^2.\]
\end{proof}

\begin{proof}[Proof of the Claim 1] 
Let $m = (1 - 2\delta)n$ and denote $l = \left\lceil \tfrac{3\mu_\twr}{4h}m \right\rceil.$ By properties (1), (3) in the definition of $W$ there exist sequences $0 < t_1 < \dots < t_l \leq m$ and $0 < t_1' < \dots < t_l' \leq n$ obeying:
 \begin{itemize}
 	\item $T^{t_k} \in F$, $T^{t_k'}y \in W_0$, 
	\item $t_k' \geq t_k$,
	\item $t_{k + 1}' - t_k' \geq t_{k + 1} - t_k$,
	\item $\sum_{k = 1}^l (t_k' - t_k) < \delta_+ n.$
 \end{itemize}
Thus for any $1 \leq k \leq l$ and by definition of $W_0$ and $N_0$ 
\[ \#|\{ 0 \leq j \leq N_0 \,\mid \, T^{t_k' + j} y \in \twr_c \} | \geq \dfrac{3 c\mu_\twr}{4}N_0.\] 
Hence there exists $0 \leq j \leq N_{0}$ such that
\begin{equation}
\label{GoodIndex} 
 \#|\{ 0 \leq k \leq l \,\mid \, T^{t_k' + j} y \in \twr_c \} | \geq \dfrac{3c\mu_\twr}{4}l.
 \end{equation}
 Let $L := \left\lceil\tfrac{3 c\mu_\twr}{4}l \right\rceil.$
By definition of $t_k, t_k'$ and by (\ref{GoodIndex}) there exist natural numbers 
\begin{equation}
\label{eq: splitting_sequences}
\begin{aligned} 
0 \leq i_1 < i_1 + h < i_2 < i_2 + h < \dots< i_L < i_L + h < n, \\
0 \leq j_1 < j_1 + h < j_2 < j_2 + h < \dots < j_L < j_L + h < n,
\end{aligned}
\end{equation}
such that $ T^{i_k}x, T^{j_k} y \in F$ for all $1 \leq k \leq L$ and
 \begin{equation*}
 \label{IndexConditions}
\sum_{k = 1}^L |i_k - j_k - j| < Lch + 2\delta_+ n.
 \end{equation*}
 Let us assume by simplicity that $i_1, j_1 > 0$. Then we can use the index sequences in (\ref{eq: splitting_sequences}) to split $v, w$ as a concatenation of non empty words 
 \[v = v^1 \cdot v^2 \cdots v^{2L+1}, \hspace{1cm} w = w^1 \cdot w^2 \cdots w^{2L+1}. \]
such that for all $1 \leq k \leq L$ 
\[v^{2k}_0, w^{2k}_0 \in F, \hone |v^{2k}| = h = |w^{2k}| \]
and
\begin{align*}
 \sum_{k= 0}^{L}\left||v^{2k + 1}| - |w^{2k + 1}|\right| & < chL + 2\delta_+n + 2N_0 \\
 & < chL + \delta n 
 \end{align*}
 \end{proof}

\begin{proof}[Proof of the Claim 2]
Notice that any $n$-word $w$ admits a unique decomposition of the form
\[ w = \prod_{i = 1}^h \sigma^i\]
where each $\sigma^i$ satisfies one of the following:
\begin{enumerate}
\item $\sigma^i_j \notin W_1$ for all $0 \leq j < |\sigma^i|$. 
\item $\sigma^i_0 \in W_1$ and $|\sigma^i| = N$. 
\item $i = h$, $\sigma^h_0 \in W_1$ and $|\sigma^h| < N$. 
\end{enumerate}
Let us denote 
\[ R(\omega) = N \left| \{ 0 < i \leq h \mid \sigma_0^i \in W, \, |\sigma^i| = N \}\right|. \]
Define
\[ R_k = \min\{R(v^k), R(w^k)\}. \]
By definition of $W_1$ for all $0 \leq k \leq L$ we can match at least $\alpha R_k$ coordinates of the blocks $(v^{2k+1}, w^{2k + 1})$. By definition of $W$ at most $2\delta n$ of the first $n$ iterates of $x$ and $y$ do not belong to $W_1$. Recall that the sum of the lengths of the odd blocks associated to each splitting is equal to $n - Lh$. Hence
\begin{align*}
\sum_{k = 0}^{L} R(v^{2k + 1}), \sum_{k = 1}^{L} R(w^{2k + 1}) & \geq n - Lh - 2\delta n - LN 
\end{align*}
Notice that
\begin{align*}
R_k & = \frac{1}{2}\left( R(v^k) + R(w^k) - |R(v^k) - R(w^k)|\right) \\
& \geq R(v^k) + R(w^k) - \frac{1}{2}\left( |v^k| + |w^k| + \left||v^k| -|w^k|\right|\right)
\end{align*}
Thus by (4) in Claim 1 it follows that
\begin{equation*}
\begin{aligned}
\sum_{k = 0}^L R_{2k + 1} & \geq n - Lh - LN - \frac{c}{2}hL - 5\delta n \\
 & \geq n\left(1 - \frac{Lh}{n}\left(1 + \frac{c}{2}\right) - 6\delta \right)
\end{aligned}
\end{equation*}
\end{proof}

\subsection{LB criterion for ergodic zero-entropy transformations}

In this section we combine Propositions \ref{thm: LB_process_criterion} and \ref{thm: LB_prop} to provide a new loose-Bernoullicity criterion for ergodic zero-entropy processes (Item 3 of Proposition \ref{thm: LB_criterion_towers}) which relies only on the existence of an appropriate sequence of Rokhlin towers. As a direct consequence, we obtain a loose-Bernoullicity criterion for ergodic zero-entropy transformations (Theorem \ref{thm: LB_criterion_transformation}). \\

By iterating Proposition \ref{thm: LB_prop} we will show the following.

\begin{prop}
\label{thm: LB_criterion_towers}
Let $(T, \Pa)$ be an ergodic zero-entropy process, $L \in \N \cup \{ +\infty\}$ and $\mathfrak{T} = \{ \twr_n\}_{1 \leq n < L}$ be a collection of Rokhlin towers. Suppose $\twr_{n}$ is $\delta_{n - 1}$-monochromatic with respect to $\Pa$ for all $1 \leq n < L$ where
\[ \alpha_0 = 0, \hone \alpha_{n + 1} = \alpha_{n} + \delta_n, \hone \delta_n = \mu_{\twr_{n + 1}}^2 \left( \frac{1 - \alpha_n}{10} \right)^2.\]
Then there exist natural numbers $N_n(T, \Pa, \twr_1, \dots, \twr_{n})$ for $0 \leq n < L$ such that for any $1 \leq l \leq L$ satisfying
\begin{equation*}
\label{eq: height_hypothesis}
h_{\twr_{n}} > N_{n - 1} \hone \text{ for all } 1 \leq n < l,
\end{equation*} 
the following holds:
\begin{enumerate}
\item There exist $(m_n)_{0 \leq n < l} \subset \N$ such that $(T, \Pa)$ verifies $P(\alpha_n, \delta_n, m_n)$ for all $0 \leq n < l$. 
\item If $l < +\infty$ and there exists a Rokhlin tower $\twr$, $\delta_\twr$-monochromatic with respect to $\Pa$ where
\[\delta_\twr = \mu_{\twr}^2\left(\frac{1 - \alpha_{l - 1}}{10}\right)^2\]
obeying 
\[h_\twr > N_l\]
 then the process $(T, \Pa)$ verifies $P(\alpha_{l - 1} + \delta_\twr, \delta_\twr, m_l)$ for some $m_l \in \N$. 
 \item If $l = +\infty$ and 
 \begin{equation}
 \label{eq: measure_condition}
 \sum_{i = 1}^{+\infty} \mu_{\twr_n}^2 = +\infty
 \end{equation}
 then the process $(T, \Pa)$ is loosely Bernoulli. 
\end{enumerate}
\end{prop}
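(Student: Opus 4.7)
The plan is to iterate Proposition \ref{thm: LB_prop} once per tower in the sequence $\mathfrak{T}$, producing in turn the properties $P(\alpha_n, \delta_n, m_n)$ for $n = 0, 1, 2, \ldots$, and then to derive item 3 by feeding these parameters into Proposition \ref{thm: LB_process_criterion}. The base case $n = 0$ is immediate: $P(0, \delta_0, m_0)$ requires only some $W$ with $\mu(W) > 1 - \delta_0$ and $\overline{f}^\Pa_{m_0}(W) < 1$, which is achieved for $m_0$ large by ergodicity, since almost every pair of points has $\Pa$-$m_0$-names sharing at least one common symbol.

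For the inductive step, I will define $N_n := \lceil 2 m_n/\delta_n \rceil$, so that the height condition required by Proposition \ref{thm: LB_prop} is met, and apply that proposition to the tower $\twr_{n+1}$ with target error $\delta_+ := \delta_{n+1}$. This produces some $m_{n+1} \in \N$ such that $(T, \Pa)$ verifies $P(\alpha_+, \delta_{n+1}, m_{n+1})$, with $\alpha_+$ as in (\ref{eq: new_alpha}). The essential (and very tight) algebraic check is that the gain $\tfrac{1}{10}\mu_{\twr_{n+1}}^2(1-\alpha_n)^2 = 10\delta_n$ delivered by Proposition \ref{thm: LB_prop} absorbs the losses $6\delta_n + 3\Delta(\twr_{n+1}, \Pa) < 9\delta_n$ (using $\delta_n$-monochromaticity), leaving exactly $\delta_n$ of margin, so that $\alpha_+ > \alpha_n + \delta_n = \alpha_{n+1}$. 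This yields item 1 by induction, and item 2 follows from one further application of Proposition \ref{thm: LB_prop} using the extra tower $\twr$.

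For item 3, write $\beta_n := 1 - \alpha_n$ and $c_n := \mu_{\twr_{n+1}}^2 / 100$, so the recurrence becomes $\beta_{n+1} = \beta_n(1 - c_n \beta_n)$. Taking reciprocals gives $1/\beta_{n+1} \geq 1/\beta_n + c_n$, and telescoping yields
\[ \frac{1}{\beta_n} \geq 1 + \frac{1}{100}\sum_{k=1}^n \mu_{\twr_k}^2, \]
which diverges by the non-summability hypothesis (\ref{eq: measure_condition}). Hence $\alpha_n \nearrow 1$ and $\delta_n \leq \beta_n^2/100 \searrow 0$, and Proposition \ref{thm: LB_process_criterion} delivers loose-Bernoullicity.

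The principal obstacle is not conceptual but numerical: the definitions have been calibrated so that the gain from Proposition \ref{thm: LB_prop} precisely dominates the combined loss from the error term $6\delta$ and the monochromaticity defect $3\Delta$, with essentially zero slack. Any more generous choice of $\delta_n$, or any weaker monochromaticity requirement on $\twr_{n+1}$, would break the inductive gain computation, so the main labour in writing out a full proof is carefully tracking these constants through the recursion that defines $N_n(T, \Pa, \twr_1, \dots, \twr_n)$.
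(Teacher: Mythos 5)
Your proposal is correct and follows essentially the same route as the paper: the base case via ergodicity, the definition $N_n = \lceil 2m_n/\delta_n\rceil$, the iteration of Proposition \ref{thm: LB_prop} with the tight check that the gain $10\delta_n$ absorbs the loss $6\delta_n + 3\Delta < 9\delta_n$ leaving margin $\delta_n$, and item 3 via Proposition \ref{thm: LB_process_criterion}. Your reciprocal-telescoping bound $1/\beta_n \geq 1 + \frac{1}{100}\sum_{k\leq n}\mu_{\twr_k}^2$ is a welcome explicit justification of the claim $\alpha_n \nearrow 1$, which the paper merely asserts.
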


\begin{proof} By the pointwise ergodic Theorem there exists $m_0$ depending only on $T$ and $\Pa$ so that property $P(\alpha_0, \delta_0, m_0)$ is verified. In fact, given $P \in \Pa$ with positive measure it suffices to take $m_0$ sufficiently large so that the set of points $x$ in $X$ for which at least one of the iterates $T^i x$ with $0 \leq i < m_0$ belongs to $P$ has measure bigger than $1 - \delta_0$. Let
 \[ N_0 = \left\lceil \dfrac{2m_0}{\delta_0} \right\rceil.\]
 We will define recursively sequences of natural numbers $m_n,$ $N_n = \left\lceil \frac{2m_n}{\delta_n} \right\rceil$ as follows. Let $1 \leq n < L$ and suppose $m_0, \dots, m_{n - 1}$ (and thus $N_0, \dots, N_{n - 1}$) have been defined. If
 \[ h_{\twr_n} < N_{n - 1} \]
 we define $N_k = N_{n - 1}$, $m_k = m_{n - 1}$ for all $n \leq l < L$ and the recursive process ends. Otherwise, if 
\[ h_{\twr_n} \geq N_{n - 1} \]
we can apply Proposition \ref{thm: LB_prop} with $\alpha = \alpha_{n - 1},$ $\delta = \delta_{n}$, $N = {N_{n - 1}}{}$, $\twr = \twr_{n},$ $\delta_+ = \delta_n$ and define
 \[ m_{n} = N_+(\alpha_{n - 1}, \delta_{n - 1}, N_{n - 1}, \twr_n, \delta_n).\]
 Notice that in this case, for $\alpha = \alpha_{n - 1},$ $\delta = \delta_{n - 1}$ and $\twr = \twr_{n}$ in equation (\ref{eq: new_alpha}) 
 \begin{equation*}
 \label{eq: new_alpha_inequality}
 \alpha_+ \geq \alpha_{n - 1} + \mu_{\twr_n}^2 (1 - \alpha_{n - 1})^2\left( \frac{1}{10} - \frac{9}{100} \right) = \alpha_{n}
 \end{equation*} 
and thus property $P(\alpha_n, \delta_n, m_n)$ is satisfied. By induction, there exist sequences of natural numbers $(N_n)_{0 \leq n < L}$ and $(m_n)_{0 \leq n < L}$ for which the first assertion holds. The second assertion now follows from Proposition \ref{thm: LB_prop}. Finally, if $l = +\infty$ it follows from (\ref{eq: measure_condition}) that $\alpha_n \nearrow 1$ and $\delta_n \searrow 0$. Therefore, the process $(T, \Pa)$ is loosely Bernoulli by Proposition \ref{thm: LB_process_criterion}.
\end{proof}

By applying item 3 in Proposition \ref{thm: LB_criterion_towers} to a sequence of refining measurable partitions we immediately obtain the following loose-Bernoullicity criterion.

\begin{thm}
\label{thm: LB_criterion_transformation}
Let $(X, T)$ be an ergodic zero-entropy transformation and let $\{ \Pa_n\}_{n \in \N} \subset \FP$ be a sequence of refining partitions satisfying $\Pa_n \rightarrow \mathcal{E}$. Suppose there exists a collection of Rokhlin towers $\mathfrak{T} = \{ \twr_n\}_{n \geq 1} \subset \TSet$ obeying:
\begin{enumerate}[(i)]
 \item \label{measure_condition} $\sum_{n = 1}^{+\infty} \mu_{\twr_n}^2 = +\infty,$
 \item $\twr_{n + 1}$ is $\delta_{i, n - i}$-monochromatic with respect to $\Pa_i$ for all $0 \leq i \leq n + 1$, where 
 \[ \alpha_{i, 0} = 0, \hone \alpha_{i, n + 1} = \alpha_{i, n} + \delta_{i, n}, \hone \delta_{i, n} = \mu_{\twr_{n + 1 + i}^2} \left( \frac{1 - \alpha_{i, n}}{10} \right)^2.\]
\end{enumerate}
There exist natural numbers $M_n(T, \Pa_1, \dots, \Pa_n, \twr_1, \dots, \twr_{n})$ such that if 
\[ h_{\twr_{n}} > M_{n - 1} \]
holds for all $n \geq 1$, then the transformation $T$ is loosely Bernoulli. 
\end{thm}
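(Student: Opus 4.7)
The plan is to apply item 3 of Proposition \ref{thm: LB_criterion_towers} to each process $(T, \Pa_i)$ separately, using an appropriate tail of the tower sequence $\mathfrak{T}$, and then invoke the equivalent characterization of loose Bernoullicity via a refining sequence of partitions recalled in Section \ref{sc: LB}.

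Fix $i \geq 0$ and relabel the tower sequence by setting $\tilde{\twr}_n := \twr_{n+i}$ for $n \geq 1$. A straightforward induction on $n$ shows that the scalars $\alpha_{i, n}$ and $\delta_{i, n}$ defined in hypothesis (ii) coincide exactly with the scalars produced by the recursion of Proposition \ref{thm: LB_criterion_towers} when that proposition is applied to the process $(T, \Pa_i)$ with tower sequence $(\tilde{\twr}_n)_{n \geq 1}$. Hypothesis (ii) is then precisely the assertion that $\tilde{\twr}_{n+1}$ is $\delta_{i, n}$-monochromatic with respect to $\Pa_i$, and hypothesis (i) gives $\sum_{n \geq 1} \mu_{\tilde{\twr}_n}^2 = +\infty$, since this sum differs from the one in (i) by finitely many non-negative terms. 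Denote by $N_k^{(i)}(T, \Pa_i, \twr_{i+1}, \ldots, \twr_{i+k})$ the natural numbers supplied by Proposition \ref{thm: LB_criterion_towers} for this particular application. Item 3 of that proposition then yields that $(T, \Pa_i)$ is loosely Bernoulli as soon as the height condition $h_{\twr_{i+k+1}} > N_k^{(i)}$ holds for every $k \geq 0$.

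To obtain a single sequence $(M_n)$ controlling all of these conditions simultaneously, I would set
\[ M_n := \max_{0 \leq i \leq n} N_{n - i}^{(i)}\bigl(T, \Pa_i, \twr_{i+1}, \ldots, \twr_n\bigr), \]
with the convention that $N_0^{(n)}$ depends only on $T$ and $\Pa_n$. By construction $M_n$ depends only on $T$, the partitions $\Pa_0, \ldots, \Pa_n$, and the towers $\twr_1, \ldots, \twr_n$, as required. Under the standing assumption $h_{\twr_n} > M_{n - 1}$ for every $n \geq 1$, substituting $n = i + k + 1$ gives $h_{\twr_{i + k + 1}} > M_{i + k} \geq N_k^{(i)}$ for every $i, k \geq 0$, so by the previous paragraph the process $(T, \Pa_i)$ is loosely Bernoulli for every $i$. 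Since $(\Pa_n)$ is a refining sequence with $\Pa_n \to \mathcal{E}$, the characterization of loose Bernoullicity recalled in Section \ref{sc: LB} implies that $T$ itself is loosely Bernoulli.

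The core of the argument is thus a routine reduction to Proposition \ref{thm: LB_criterion_towers}. The main subtlety is not the verification of its hypotheses, which is essentially immediate once the scalars are matched through the relabeling $\tilde{\twr}_n = \twr_{n+i}$, but rather the index bookkeeping needed to package the countably many applications of that proposition, one per partition $\Pa_i$, into a single uniform height sequence $(M_n)$.
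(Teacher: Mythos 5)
Your proposal is correct and follows essentially the same route as the paper, which derives the theorem directly by applying item 3 of Proposition \ref{thm: LB_criterion_towers} to each process $(T, \Pa_i)$ with the shifted tower sequence and then takes $M_n$ to be the maximum over $0 \leq i \leq n$ of the resulting thresholds, exactly as you do. Your relabeling $\tilde{\twr}_n = \twr_{n+i}$ and the identification of the scalars $\alpha_{i,n}, \delta_{i,n}$ with the recursion of that proposition is the same bookkeeping the paper leaves implicit.
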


\noindent \textbf{Remark:} $M_n$ in the previous Theorem can be taken as
\[ M_n = \max_{0 \leq i \leq n} N_i(T, \Pa_i, \twr_i, \dots, \twr_n)\]
with $N_i$ as defined in Proposition \ref{thm: LB_criterion_towers}. \\

Let us point out that the hypotheses of Theorem \ref{thm: LB_criterion_transformation} are always satisfied by local rank one systems and thus Theorem \ref{thm: LB_criterion_transformation} can be seen as a generalization, in the zero-entropy case, of Ferenczi's result in \cite[Proposition 1]{ferenczi_systemes_1984}, which shows that ergodic local rank one transformations are loosely Bernoulli. \\

Recall that a system $\DSS$ is said to have \textit{local rank one} if there exists $a > 0$ such that for every finite measurable partition $\Pa \in \FP$ and for every $\epsilon > 0$ there exists a Rokhlin tower $\twr \in \TSet$, with $\mu_\twr > a$, and a measurable partition of the support of the tower $\Qa$, obeying $\twr \preccurlyeq \Qa$, such that
\[ D(\Qa, \Pa|_\twr) < \epsilon. \]
In a few words, local rank one property guarantees that any finite measurable partition can be arbitrarily well approximated by towers whose size is bounded from below by a fixed positive constant. \\ 

We stress the fact that the size of the towers in Theorem \ref{thm: LB_criterion_transformation} is not necessarily bounded from below. 
 
\subsection{LB criterion for the cartesian product} 

In this section we adapt Proposition \ref{thm: LB_criterion_towers} and Theorem \ref{thm: LB_criterion_transformation} to obtain similar loose-Bernoullicity criteria, compatible with mixing, for the cartesian product of ergodic zero-entropy processes (Item 3 in Proposition \ref{thm: LB_criterion_towers_product}) and transformations (Theorem \ref{thm: LB_criterion_transformation_product}).\\

Notice that any pair of Rokhlin towers $\twr^{\pm} = \twr(F^{\pm}, h^{\pm})$ induces a tower $\twr = \twr(F^+\times F^-, \max\{h^+, h^-\})$ for the cartesian product $T \times T$ whose size is given by
\[\mu_\twr = \frac{\mu_{\twr^+} \mu_{\twr^-}}{\min\{h^+, h^-\}}.\]

The height of this product tower can be increased, at the cost of slightly shrinking the base, if some almost periodicity holds for the initial towers, that is, if most of their roof's image is contained in their base. To illustrate this let us suppose for a moment that 
\begin{equation}
\label{eq: periodic_tower}
T^h(F^{\pm}) = F^{\pm}.
\end{equation} 
Then if $h^+, h^-$ are relatively prime $\twr(F^+ \times F^-, h^+ h^-)$ is a well defined Rokhlin tower of size $\mu_{\twr^+} \mu_{\twr^-}$. Clearly, if the system is mixing equation (\ref{eq: periodic_tower}) cannot hold unless $F^{\pm} = X$. Nevertheless, this argument can be adapted provided the \textit{precision} of the towers $\twr^{\pm}$ is sufficiently small. Recall that the precision of a Rokhlin tower $\twrb$, which we denote by $\rho_\twr$, is given by
\[ \rho_\twr = \mu (F \Delta T^h(F)). \]
The following Lemma gives explicit relations between the towers $\twr^{\pm}$ and its cartesian product. 

\begin{lem}
\label{thm: product_tower}
Let $\DS$ be a finite measure preserving transformation, $\Pa^{\pm} \in \FP$ and $\twr^{\pm} = \twr(F^{\pm}, h^{\pm}) \in \TSet$ Rokhlin towers of sizes $\mu^{\pm}$ with $h_+, h_-$ relatively prime. If
\begin{equation}
\label{PrecisionCondition}
 (h^{\mp} - 1) \rho_{\twr^\pm} < (1 - c)\dfrac{\mu^{\pm}}{h^{\pm}},
\end{equation}
for some $0 < c < 1$, then there exists $F \subset F^+ \times F^-$ such that $\twr(F, h^+h^-)\in \twr(X \times X, T \times T)$ is a well-defined Rokhlin tower of size $\mu_{\twr} > c^2\mu_+ \mu_-{}$ satisfying
\[ d(\twr, \Pa^+ \times \Pa^-) \leq d(\twr^+, \Pa^+)\mu_{\twr^-} + d(\twr^-, \Pa^-)\mu_{\twr^+}.\]
\end{lem}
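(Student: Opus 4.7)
The plan is to build the base $F$ as a product $F = F^+_0 \times F^-_0$, where
\[ F^\pm_0 = \bigcap_{j=0}^{h^\mp - 1} T^{-jh^\pm}(F^\pm). \]
This definition is chosen so that for $x \in F^+_0$ and any $0 \leq s < h^+ h^-$, writing $s = jh^+ + r$ with $0 \leq j < h^-$ and $0 \leq r < h^+$ yields $T^s x = T^r(T^{jh^+}x) \in T^r(F^+)$, and symmetrically for $F^-_0$. Hence each orbit segment of $T \times T$ starting in $F$ visits predictable product levels of the original towers during the first $h^+ h^-$ iterates.

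The Rokhlin property for $\twr(F, h^+ h^-)$ then follows from a Chinese Remainder Theorem argument: if $(T \times T)^s(F)$ and $(T \times T)^{s'}(F)$ intersected for some $0 \leq s < s' < h^+ h^-$, then their first coordinates would lie in a common level of $\twr^+$, forcing $s \equiv s' \pmod{h^+}$; symmetrically $s \equiv s' \pmod{h^-}$. By coprimality, this forces $s = s'$, a contradiction.

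The crux is a linear bound on $\mu(F^\pm \setminus F^\pm_0)$ matching the precision hypothesis. Setting $G^+_k = \bigcap_{j=0}^k T^{-jh^+}(F^+)$, the inclusion $G^+_k \setminus G^+_{k+1} \subset T^{-kh^+}(F^+) \setminus T^{-(k+1)h^+}(F^+)$ combined with $T$-invariance of $\mu$ gives $\mu(G^+_k) - \mu(G^+_{k+1}) \leq \rho_{\twr^+}$. Telescoping yields $\mu(F^+ \setminus F^+_0) \leq (h^- - 1)\rho_{\twr^+} < (1-c)\mu(F^+)$ by (\ref{PrecisionCondition}), hence $\mu(F^\pm_0) > c\,\mu(F^\pm)$ and $\mu_\twr = h^+ h^-(\mu \times \mu)(F) > c^2 \mu^+ \mu^-$.

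For the partition distance I would label each level $(T \times T)^s(F)$ by the atom $P^+_{I^+(r)} \times P^-_{I^-(r')}$ of $\Pa^+ \times \Pa^-$, with $r = s \bmod h^+$, $r' = s \bmod h^-$ and $I^\pm$ the index functions for $(\twr^\pm, \Pa^\pm)$ introduced in Section \ref{sc: LB_preliminaries}. Since $F$ is a product set, so is each level, and the error on level $s$ splits as
\[ \mu(T^r F^+ \setminus P^+_{I^+(r)})\,\mu(F^-) + \mu(F^+)\,\mu(T^{r'} F^- \setminus P^-_{I^-(r')}). \]
Summing over $s$ and using the CRT bijection $s \leftrightarrow (r, r')$ converts the two sums into $h^- \mu(F^-)\,\Delta(\twr^+, \Pa^+) = \mu^-\,\Delta(\twr^+, \Pa^+)$ and $h^+ \mu(F^+)\,\Delta(\twr^-, \Pa^-) = \mu^+\,\Delta(\twr^-, \Pa^-)$, which is the claimed bound. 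The main obstacle is the linear telescoping above: the naive triangle inequality $\mu(F^+ \setminus T^{-jh^+} F^+) \leq j\,\rho_{\twr^+}$ would only give a quadratic $\binom{h^-}{2}\rho_{\twr^+}$, too weak to match the hypothesis (\ref{PrecisionCondition}).
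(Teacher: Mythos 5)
Your proof is correct and follows essentially the same route as the paper: the base is the intersection of $T^{-jh^{\pm}}$-preimages of $F^{\pm}$, disjointness of the product levels comes from coprimality of the heights, and the partition estimate comes from splitting each product level's error into the two factors and summing via the correspondence $s \leftrightarrow (s \bmod h^+, s \bmod h^-)$. The one place you add value is making explicit the telescoping bound $\mu(F^{\pm} \setminus F^{\pm}_0) \leq (h^{\mp}-1)\rho_{\twr^{\pm}}$, which the paper asserts without detail and which, as you note, would fail if one used the naive triangle inequality.
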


\begin{proof}
 Let 
 \[E^{\pm} = \bigcap_{k = 1}^{h^{\mp} - 1} T^{-kh^{\pm}}(F^{\pm}).\]
By (\ref{PrecisionCondition}) 
\begin{equation}
\label{MeasureProduct}
\mu(E^{\pm}) > \frac{c\mu^{\pm}}{h^{\pm}}.
\end{equation}
Let $F = E^+ \times E^-$. Notice that for any $0 < i < h^+h^-$
\[ F \cap (T \times T) ^i(F) \neq \emptyset \hspace{0.5cm} \text{implies} \hspace{0.5cm} h^{\pm} \, | \, i .\] 
Since $h^+, h^-$ are relatively prime this shows that $\twr(F, h^+h^-)$ is a well defined Rokhlin tower. The estimate on the size of this tower follows trivially from (\ref{MeasureProduct}). Denote $\Pa^{\pm} = \{ P_1^{\pm}, \dots, P_N^{\pm}\}$ and let $ = \{\xi_1^{\pm}, \dots, \xi_N^{\pm}\}$ be partitions of $\twr^{\pm}$ such that 
\[ \twr^{\pm} \preccurlyeq \xi^{\pm}, \hspace{1cm} d(\twr^\pm, \Pa^\pm) = d(\xi^\pm, \Pa^\pm).\]
Denote $\Pa = \Pa^+ \times \Pa^-$. Notice that $\twr = \twr(T \times T, F, h^+h^-)\subset \twr^+ \times \twr^-$ and since $\xi^+\times \xi^-$ is a partition of $\twr^+ \times \twr^-$ then $\xi = (\xi^+\times \xi^-)|_\twr$ is a well defined partition of $\twr$. Furthermore $\twr \preccurlyeq \xi$ and 
\begin{align*}
\Delta(\twr, \Pa) & \leq d(\xi, \Pa|_\twr) \\
& = \sum_{i, j = 1}^N \mu((\xi^+_i \times \xi^-_j \cap \twr) \,\setminus \, (P^+_i \times P^-_j)) \\
& \leq \sum_{i, j = 1}^N \mu(\xi_i^+ \,\setminus\, P_i^+ \times \xi_j^-) + \mu(\xi_i^+ \times \xi_j^- \,\setminus\, P^-_j) \\
& \leq d(\xi^+, \Pa^+)\mu_{\twr^-} + d(\xi^-, \Pa^-)\mu_{\twr^+}
\end{align*}
\end{proof}

Taking into account the previous Lemma the proof of the following result follows the same lines than that of Proposition \ref{thm: LB_criterion_towers}.

\begin{prop}
\label{thm: LB_criterion_towers_product}
Let $(T, \Pa)$ be a weak mixing zero-entropy process, $L \in \N \cup \{ +\infty\}$ and $\mathfrak{T} = \{ \twr_n^\pm\}_{1 \leq n < L}$ be a collection of Rokhlin towers. Suppose $h_{\twr^+_n}, h_{\twr^-_n}$ are relatively prime and
\[ \rho_{\twr_n^{\pm}} \leq \frac{\mu_{\twr_n}^\pm}{2h_{\twr_n}^+h_{\twr_n}^-}\]
for all $1 \leq n < L$. Further assume that $\twr_{n+ 1}^\pm$ are $\delta_{n}$-monochromatic with respect to $\Pa$ for all $1 \leq n < L$ where
\[ \alpha_0 = 0, \hone \alpha_{n + 1} = \alpha_{n} + \delta_n, \hone \delta_n = \left(\frac{\mu_{\twr_{n + 1}^+}\mu_{\twr_{n + 1}^-}}{2}\right)^2\left( \frac{1 - \alpha_n}{10} \right)^2.\]
Then there exist natural numbers $N_n(T, \Pa, \twr_1^\pm, \dots, \twr_{n}^\pm)$ for $0 \leq n < L$ such that for any $1 \leq l \leq L$ satisfying
\begin{equation}
\label{eq: height_hypothesis_product}
h_{\twr_{n}^+}h_{\twr_{n}^-} > N_{n - 1} \hone \text{ for all } 1 \leq n < l
\end{equation}
the following holds:
\begin{enumerate}
\item There exist $(m_n)_{0 \leq n < l} \subset \N$ such that the process $(T \times T, \Pa \times \Pa)$ verifies $P(\alpha_n, \delta_n, m_n)$ for all $0 \leq n < l$. 
\item If $l < +\infty$ and there exist Rokhlin towers $\twr^\pm$, $\delta_\twr$-monochromatic with respect to $\Pa$ where
\[\delta_\twr = \left(\frac{\mu_{\twr^+}\mu_{\twr^-}}{2}\right)^2\left(\frac{1 - \alpha_{l - 1}}{10}\right)^2\]
obeying 
\[h_{\twr^+}h_{\twr^-} > N_l\]
 then the process $(T \times T, \Pa \times \Pa)$ verifies $P(\alpha_{l - 1} + \delta_\twr, \delta_\twr, m_l)$ for some $m_l \in \N$. 
 \item If $l = +\infty$ and 
 \begin{equation}
 \label{eq: measure_condition_product}
\sum_{n = 1}^{+\infty} \Big(\mu_{\twr_{n + 1}^+}\mu_{\twr_{n + 1}^-}\Big)^2 = +\infty,
 \end{equation}
 then the process $(T \times T, \Pa \times \Pa)$ is loosely Bernoulli. 
\end{enumerate}
\end{prop}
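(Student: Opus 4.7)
The strategy is to reduce the statement to Proposition \ref{thm: LB_criterion_towers} applied to the product process $(T \times T, \Pa \times \Pa)$. The preliminary observation is that $(T \times T, \Pa \times \Pa)$ is an ergodic zero-entropy process: weak mixing of $T$ yields ergodicity of $T \times T$, and additivity of entropy under direct products gives $h(T \times T) = 0$. The rest of the argument is a transcription of the proof of Proposition \ref{thm: LB_criterion_towers} with the towers $\twr_n$ replaced by product towers built from the pairs $\twr_n^\pm$.

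The first step is to produce, for each $n$, a Rokhlin tower $\mathcal{T}_n$ for $T \times T$ via Lemma \ref{thm: product_tower}. The precision hypothesis
\[\rho_{\twr_n^\pm} \leq \frac{\mu_{\twr_n^\pm}}{2\, h_{\twr_n^+}h_{\twr_n^-}}\]
implies $(h^{\mp} - 1)\rho_{\twr_n^\pm} < (1 - c)\mu_{\twr_n^\pm}/h_{\twr_n^\pm}$ for some uniform $c$ (e.g. $c = 1/2$), so Lemma \ref{thm: product_tower} gives a tower $\mathcal{T}_n$ of height $h_{\twr_n^+} h_{\twr_n^-}$, size comparable to $\mu_{\twr_n^+}\mu_{\twr_n^-}$, and monochromaticity defect
\[\Delta(\mathcal{T}_n, \Pa \times \Pa) \;\leq\; \Delta(\twr_n^+, \Pa)\,\mu_{\twr_n^-} + \Delta(\twr_n^-, \Pa)\,\mu_{\twr_n^+} \;\leq\; 2\delta_{n-1}.\]
This is exactly the monochromaticity input needed to feed $\mathcal{T}_n$ into Proposition \ref{thm: LB_prop}; the constants in the statement's definition of $\delta_n$ have been chosen precisely to absorb the factor coming from $c^2$ in Lemma \ref{thm: product_tower} and the factor $1/10$ in equation (\ref{eq: new_alpha}).

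The second step is to run the recursion of Proposition \ref{thm: LB_criterion_towers} verbatim on $(T \times T, \Pa \times \Pa)$ with the towers $\mathcal{T}_n$. Assuming $P(\alpha_{n-1}, \delta_{n-1}, m_{n-1})$ has been established, the height condition $h_{\mathcal{T}_n} = h_{\twr_n^+}h_{\twr_n^-} > N_{n-1}$, which is exactly (\ref{eq: height_hypothesis_product}), allows one to apply Proposition \ref{thm: LB_prop} and obtain $P(\alpha_n, \delta_n, m_n)$ with $\alpha_n \geq \alpha_{n-1} + \delta_{n-1}$. Item~1 follows by induction; Item~2 follows from one additional application of Proposition \ref{thm: LB_prop} to the pair $\twr^\pm$; and Item~3 follows from the divergence condition (\ref{eq: measure_condition_product}), which forces $\alpha_n \nearrow 1$ and $\delta_n \searrow 0$, combined with Proposition \ref{thm: LB_process_criterion}.

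\textbf{Main obstacle.} No genuinely new idea is required beyond Proposition \ref{thm: LB_criterion_towers}; the technical heart of the proof has already been carried out in Proposition \ref{thm: LB_prop}. The work lies almost entirely in bookkeeping: one must verify that the precision and monochromaticity data for $\twr_n^\pm$ on $X$ translate, via Lemma \ref{thm: product_tower}, into data for $\mathcal{T}_n$ on $X \times X$ whose constants line up cleanly with the recursive definitions of $\alpha_n$ and $\delta_n$, and one must use weak mixing of $T$ at exactly one place, namely to apply the pointwise ergodic theorem to $T \times T$ inside Proposition \ref{thm: LB_prop}.
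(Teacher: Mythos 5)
Your proposal is correct and is essentially the paper's own proof: the paper establishes this proposition only by remarking that, ``taking into account'' Lemma \ref{thm: product_tower}, the argument ``follows the same lines'' as Proposition \ref{thm: LB_criterion_towers}, which is precisely the reduction you carry out (ergodicity of $T \times T$ from weak mixing, zero entropy by additivity, product towers from Lemma \ref{thm: product_tower}, then the recursion through Proposition \ref{thm: LB_prop} and the conclusion via Proposition \ref{thm: LB_process_criterion}). The one step you assert rather than verify---that the constants ``line up cleanly''---is also glossed over by the paper: with the stated precision hypothesis, Lemma \ref{thm: product_tower} only yields $c = 1/2$ and hence $\mu_{\mathcal{T}_n} > \tfrac{1}{4}\mu_{\twr_n^+}\mu_{\twr_n^-}$ rather than the $\tfrac{1}{2}\mu_{\twr_n^+}\mu_{\twr_n^-}$ implicit in the definition of $\delta_n$, so a careful write-up must track this extra factor (or tighten the precision hypothesis/adjust $\delta_n$) to make the inequality $\alpha_+ \geq \alpha_n + \delta_n$ close, but this is a bookkeeping issue inherited from the paper rather than a flaw in your approach.
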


In the previous Proposition we assume the system to be weak mixing, instead of ergodic, just to guarantee that its cartesian product is ergodic. Proposition \ref{thm: LB_criterion_towers_product} immediately implies the following. 

\begin{thm}
\label{thm: LB_criterion_transformation_product}
Let $(X, T)$ be a weak mixing zero-entropy transformation and $\{ \Pa_n\}_{n \in \N} \subset \FP$ be a sequence of refining partitions satisfying $\Pa_n \rightarrow \mathcal{E}$. Suppose there exists a collection of Rokhlin towers $\mathfrak{T} = \{ \twr_n^\pm\}_{n \geq 1} \subset \TSet$ obeying:
\begin{enumerate}[(i)]
 \item \label{measure_condition} $\sum_{n = 1}^{+\infty} \big(\mu_{\twr_n^+}\mu_{\twr_n^-}\big)^2 = +\infty,$
 \item $\rho_{\twr_n^{\pm}} \leq \frac{\mu_{\twr_n}^\pm}{2h_{\twr_n}^+h_{\twr_n}^-}$ for all $n \geq 1$,
 \item $h_{\twr^+_n}, h_{\twr^-_n}$ are relatively prime for all $n \geq 1$,
 \item $\twr_{n + 1}^\pm$ is $\delta_{i, n - i}^\pm$-monochromatic with respect to $\Pa_i$ for all $0 \leq i \leq n + 1$, where 
 \[ \alpha_{i, 0} = 0, \hone \alpha_{i, n + 1} = \alpha_{i, n} + \delta_{i, n},\]
 \[ \delta_{i, n} = \left(\frac{\mu_{\twr_{n + 1 + i}^+}\mu_{\twr_{n + 1 + i}^-}}{2}\right)^2\left( \frac{1 - \alpha_{i, n}}{10} \right)^2.\]
\end{enumerate}
There exist natural numbers $M_n(T, \Pa_1, \dots, \Pa_n, \twr_1^\pm, \dots, \twr_{n}^\pm)$ such that if 
\[h_{\twr_{n}^+}h_{\twr_{n}^-} > M_{n - 1} \]
holds for all $n \geq 1$, then the transformation $T \times T$ is loosely Bernoulli. 
\end{thm}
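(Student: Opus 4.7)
The plan is to deduce the theorem from Proposition \ref{thm: LB_criterion_towers_product} applied separately to each partition $\Pa_i$ of the refining sequence, together with the equivalent formulation of loose-Bernoullicity recalled in Section \ref{sc: LB}. Since $T$ has zero entropy and $T \times T$ is weak mixing (hence ergodic), $T \times T$ is itself an ergodic zero-entropy automorphism, so it suffices to exhibit a refining sequence of finite measurable partitions of $X \times X$ that converges to $\mathcal{E} \otimes \mathcal{E}$ and for which all associated processes are LB. The candidate is $\{ \Pa_i \times \Pa_i\}_{i \in \N}$: this family is refining because the $\Pa_i$ are, and converges to $\mathcal{E} \otimes \mathcal{E}$ by a standard approximation on the algebra of rectangles (any $A \times B$ with $A, B \in \mathcal{E}$ is approximated by $A_i \times B_i$ with $A_i, B_i$ $\Pa_i$-measurable). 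Thus the proof reduces to showing that $(T \times T, \Pa_i \times \Pa_i)$ is loosely Bernoulli for every $i \in \N$.

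For each fixed $i$, I would apply item 3 of Proposition \ref{thm: LB_criterion_towers_product} to the process $(T, \Pa_i)$ with the shifted tower collection $\{\twr_n^\pm\}_{n \geq i + 1}$. Hypotheses (ii) and (iii) of the theorem directly supply the precision bound and the coprimality assumption for this shifted sequence; hypothesis (iv) is designed to match, verbatim, the monochromaticity condition of the Proposition with the re-indexed sequences $\alpha_{i, n}, \delta_{i, n}$; and hypothesis (i), the divergence $\sum_{n} (\mu_{\twr_n^+} \mu_{\twr_n^-})^2 = +\infty$, is preserved when the first $i$ summands are removed, so the measure condition (\ref{eq: measure_condition_product}) of the Proposition is satisfied.

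It remains to choose the numbers $M_n$ so that the single family of inequalities on the heights enforces the condition (\ref{eq: height_hypothesis_product}) required by each of these applications simultaneously. Writing
\[ N^{(i)}_k := N_k(T, \Pa_i, \twr_{i + 1}^\pm, \dots, \twr_{i + 1 + k}^\pm) \]
for the natural numbers provided by Proposition \ref{thm: LB_criterion_towers_product} applied to $(T, \Pa_i)$ with the shifted towers, I would define
\[ M_n = \max_{0 \leq i \leq n} N^{(i)}_{n - i}. \]
Then the hypothesis $h_{\twr_n^+} h_{\twr_n^-} > M_{n - 1}$ for every $n \geq 1$ yields, for each fixed $i$ and every $n \geq i + 1$, the inequality $h_{\twr_n^+} h_{\twr_n^-} > N^{(i)}_{n - 1 - i}$, which is precisely (\ref{eq: height_hypothesis_product}) for the $i$-th invocation of the Proposition. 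The Proposition then delivers loose-Bernoullicity of $(T \times T, \Pa_i \times \Pa_i)$ for every $i$, from which the theorem follows. There is no substantial analytic obstacle here, since Proposition \ref{thm: LB_criterion_towers_product} has already absorbed the real work; the only delicate step is the organizational one of defining $M_n$ as a maximum over finitely many previously-constructed quantities, so that a single uniform height hypothesis triggers the correct bound for every partition at once.
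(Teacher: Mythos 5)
Your proposal is correct and follows essentially the same route as the paper: the paper states that the theorem follows immediately from item 3 of Proposition \ref{thm: LB_criterion_towers_product} applied to each $(T,\Pa_i)$ with the correspondingly shifted tower collection, and its remark defines $M_n$ as exactly the maximum over $i$ of the $N$-thresholds from the Proposition, just as you do. The only details you add beyond the paper (checking that $\Pa_i\times\Pa_i\rightarrow\mathcal{E}\otimes\mathcal{E}$, that weak mixing gives ergodicity of $T\times T$, and the index bookkeeping for the shifted sequences) are all consistent with the paper's setup.
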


\noindent \textbf{Remark:} $M_n$ in the previous Theorem can be taken as
\[ M_n = \max_{0 \leq i \leq n} N_i(T, \Pa_i, \twr_i^\pm, \dots, \twr_n^\pm)\]
with $N_i$ as defined in Proposition \ref{thm: LB_criterion_towers_product}. \\

A similar criterion for loose-Bernoullicity of the cartesian product of a measure preserving transformation with itself was proven by Katok and Stepin in \cite{katok_metric_1970} assuming the existence of \textit{excellent approximations of type $(n, n + 1)$}. Precise definitions and a proof of this criterion can be found in \cite[Section 3.2]{katok_combinatorial_2003}. We stress the fact that transformations within the hypotheses of Katok and Stepin's criterion cannot be mixing (see \cite{katok_combinatorial_2003}). Also, for a given finite partition $\Pa$, systems with excellent approximations of type $(n, n + 1)$ admit couples of arbitrarily high towers whose heights differ by $1$, whose size is bounded from below and which approximate $\Pa$ arbitrarily well. Therefore, we can apply Theorem \ref{thm: LB_criterion_transformation_product} to recover Katok and Stepin's criterion. 

\section{Smooth mixing transformations with LB cartesian product}
\label{sc: LB_construction}

In this section we will apply the loose-Bernoullicity criterion in Theorem \ref{thm: LB_criterion_transformation_product} to show the existence of zero-entropy smooth mixing loosely Bernoulli transformations on $\T^3$ whose cartesian product with themselves is loosely Bernoulli. These systems will be obtained as the limits of the recursive procedure described in Section \ref{sc: construction} and which we summarize in Lemma \ref{thm: recursive_construction}. \\
 
The transformations we consider are time one maps of continuous special flows over minimal translations of $\T^2$. As we shall see in the following, these maps can be naturally identified with continuous homeomorphisms of $\T^3$ and they possess a natural invariant ergodic measure. Under some additional assumptions these maps will be actually mixing with respect to this measure. Furthermore, by Abramov's \cite{abramov_entropy_1959} entropy formula (\ref{eq: abramov_formula}) these maps will have zero entropy. 

\subsection{Special Flows} 

Given a measure preserving transformation $\DS$ and an integrable function $\varphi : X \rightarrow (0, +\infty)$ denote by $\sim_{T, \varphi}$ the equivalence relation on $X \times \R$ given by
\[ (x, s + \varphi(x)) \sim_{T, \varphi} \left(T(x), s \right).\]
The \textit{special flow over $T$ with roof function $\varphi$} which we denote 
$\Psi^t_{T, \varphi}: X_\varphi \rightarrow X_\varphi,$
 where $X_\varphi = X \times \R /_{\sim_{T, \varphi}}$, is defined as the identification by $\sim_{T, \varphi}$ of the action\[ (t, x, s) \mapsto (x, s + t).\]
Using the Birkhoff sums of $\varphi$ with respect to $T$ and noticing that 
\[ X_\varphi = \left\{ (x, s) \in X \times \R \mid 0 \leq s \leq \varphi(x)\right\} /_{\sim_{T, \varphi}},\]
we can provide a more explicit expression of the special flow. Given $(x, s) \in X \times \R$ obeying $0 \leq s < \varphi(x)$ and for any $t \geq 0$ we have
\[ \Psi^t_{T, \varphi}(x, s) = \Bigg(T^{\pi(t, x, s)}(x), t + s - \sum_{k = 0}^{\pi(t, x, s) - 1} \varphi(T^k)(x,y)\Bigg),\]
where
\[ \pi(t, x, s) = \min\Bigg\{ n \in \N \,\Bigg|\, t + s < \sum_{k = 0}^{n} \varphi(T^k)(x) \Bigg\}.\]
 The special flow $\Psi^t_{T, \varphi}$ preserves the normalized product measure 
\[ \lambda_{T, \varphi} = \frac{(\mu \times \lambda)\mid_{X_\varphi}}{\int_X \varphi d\mu}, \]
where $\lambda$ denotes the Lebesgue measure on $\R$. We recall Abramov's \cite{abramov_entropy_1959} entropy formula
\begin{equation}
\label{eq: abramov_formula}
 h_{\lambda_{T, \varphi}}(\Psi^t_{T, \varphi}) = t \frac{h_{\mu}(T)}{\int_X \varphi d\mu}.
 \end{equation}
 
 \subsubsection{Time one maps of special flows over $\T^2$}

Let us recall that the toral translation $R_\omega: \T^2 \rightarrow \T^2$, where $\omega \in \R^2$ denotes the \textit{translation vector}, is minimal if and only if $(1, \omega)$ is \textit{non-resonant}. A vector $v \in \R^d$ is said to be non-resonant if its coordinates are \textit{rationally independent}, i.e. if the equation $\langle v, k \rangle = 0$ does not admit a solution $k \in \Z^d \setminus \{ 0 \},$ otherwise it is called \textit{resonant}. \\

Given $r \in \N \cup \{+\infty\}$ let $C^r_+(\T^2) $ denote the space of strictly positive $C^r$ functions on $\T^2$ endowed with the $C^r$ topology. We denote $C^0_+(\T^2)$ simply by $C_+(\T^2)$. To simplify the notations, for $\omega \in \R^2$ fixed we denote 
\[ \tmap{\varphi} = \Psi^1_{R_{\omega}, \varphi}, \hone \lambda_{\omega, \varphi} = \lambda_{R_{\omega}, \varphi}\]
 for every $\varphi \in C_+(\T^2)$. By (\ref{eq: abramov_formula}) 
 \[h_{\lambda_{\omega, \varphi}}(\tmap{\varphi}) = 0.\]
 Notice that for $\varphi, \psi \in C_+(\T^2)$ the transformations $\tmap{\varphi}$, $\tmap{\psi}$ are not necessarily defined on the same space. To be able to compare these two mappings we first normalize them as follows. Let $\omega = (\Omega, \Omega') \in \R^2$. For every $\varphi \in C_+(\T^2)$ let
\[ \begin{array}{cccc}
 \Phi_{\omega, \varphi}: & \T^3 & \longrightarrow & X_\varphi \\
 & (x, y, z) & \mapsto & (x + \Omega, y + \Omega', z\varphi(x, y)) 
 \end{array}.
 \]
It is easy to show that $\Phi_{\omega, \varphi}$ is a well defined homeomorphism. Define
 \begin{equation}
 \label{eq: normalization}
 G_{\omega, \varphi} = \Phi_\varphi^{-1} \circ \tmap{\varphi} \circ \Phi_\varphi, \hone \mu_{\omega, \varphi} = \Phi_\varphi^*(\lambda_{\omega, \varphi}).
 \end{equation}
Thus $\transf{\varphi}$ is a well defined measure preserving homeomorphism of $\T^3$ for any $\varphi \in C_+(\T^2)$. Furthermore, the transformation $\Phi_{\omega, \varphi}$ (and therefore $\map{\varphi}$ and $\meas{\varphi}$) is as regular as the roof function $\varphi$. \subsection{Mixing criterion}

Given $\omega = (\Omega, \Omega') \in \R^2$ such that $(1, \omega)$ is non-resonant we denote by $q_n$, $q_n'$ the \textit{first return times} of $\Omega$ and $\Omega'$ respectively. Recall that given $\alpha \in (0,1)$ irrational and by setting $q_0 = 1$ the first return times of $\alpha$ can be defined recursively for all $n \in \N$ by
\[q_{n + 1} = \min \left\{ k \in \N^* \mid \vvvert k\alpha \vvvert < \vvvert q_n\alpha \vvvert \right\}, \]
where $\vvvert \cdot \vvvert$ denotes the distance to the closest integer. We extend the definition of $\vvvert \cdot \vvvert$ to vectors in $\R^d$ by
\[ \vvvert x \vvvert = \min_{k \in \Z^d}|k - x|_1.\]
For $\alpha \in \R$ let us denote
\[ \{ \alpha\} = \alpha - \lfloor \alpha \rfloor. \]
The following mixing criterion was proved by Fayad in \cite[Proposition 3.3]{fayad_smooth_2006}.

\begin{prop}
\label{thm: mixing_criterion}
Suppose
\begin{equation}
\label{eq: rotation_conditions}
q_n' \geq e^{3q_n}, \hspace{1cm} q_{n+1}, \geq e^{3q_n'} \hone \text{ for all } n \in \N. 
\end{equation}
If for every $n \in \N$ sufficiently large there exist two sets $I_n, I_n'$, each one being equal to the circle minus two intervals whose lengths converge to zero, and if 
\begin{itemize}
\item For any $y \in \T$, any $x$ such that $\{q_n x\} \in I_n$, and any $m \in [e^{2q_n}/2, 2e^{2q_n'}]$, we have
\[ |\partial_xS_m\varphi(x,y)| \geq \dfrac{m}{e^{q_n}}\dfrac{q_n}{n}\]
\item For any $x \in \T$, any $y$ such that $\{q_n y\} \in I_n'$, and any $m \in [e^{2q_n}/2, 2e^{2q_{n+1}}]$, we have
\[ |\partial_yS_m\varphi(x,y)| \geq \dfrac{m}{e^{q_n'}}\dfrac{q_n'}{n}\]
\end{itemize}
then the special flow $\Psi^t_{R_\omega, \varphi}$ is mixing. 
\end{prop}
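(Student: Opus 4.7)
The plan is to follow the standard stretching-of-Birkhoff-sums paradigm for proving mixing of special flows over rotations, as developed by Fayad. First I would reduce mixing to showing correlation decay for a dense class of test functions on $X_\varphi$, for instance products of the form $F(x,y,z) = \phi(x,y)\chi(z)$ with $\phi, \chi$ smooth and $\chi$ of small support in $z$. Using the definition of the special flow, the time-$t$ image of such a function can be written as $F\circ\Psi^t(x,y,z) = \phi(R_\omega^{N_t} (x,y))\,\chi\bigl(z+t - S_{N_t}\varphi(x,y)\bigr)$, where $N_t = \pi(t,x,y,z)$. Fixing a small $\chi$ localizes $N_t$ to essentially a single value $N = N(t)$ on each piece of phase space, so the correlation with another function $G$ reduces to an oscillatory integral on $\mathbb{T}^2$ whose phase is governed by $S_N\varphi$.

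Next I would match the time scale $t$ to the return times. Given large $t$, the chain of inequalities in (\ref{eq: rotation_conditions}) ensures that there is a unique index $n$ with $t \in [e^{2q_n}/2, 2e^{2q_n'}]$ or $t \in [e^{2q_n'}/2, 2e^{2q_{n+1}}]$, and in each of these windows the corresponding hypothesis of the criterion gives a lower bound on $|\partial_x S_N\varphi|$ or $|\partial_y S_N\varphi|$ on a set parameterized by $\{q_n x\}\in I_n$ or $\{q_n' y\}\in I_n'$. In the first case, fixing $y$ and integrating in $x$, I would partition $\mathbb{T}$ into intervals on which $\{q_n x\}$ lies in $I_n$ and apply a van der Corput / stationary phase estimate to the $x$-integral of $\chi(z+t-S_N\varphi(x,y))$ against a smooth density; the derivative lower bound forces this integral to be small in $L^1(dy\,dz)$, with error going to zero as $n\to\infty$ uniformly in the remaining variables.

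The complementary windows are handled identically, but using $\partial_y$ and $I_n'$ instead; together the two families of windows cover every sufficiently large $t$, yielding correlation decay along the full time axis. The parts of $\mathbb{T}^2$ excluded by $\{q_n x\}\notin I_n$ (respectively $\{q_n' y\}\notin I_n'$) contribute negligibly because the lengths of the bad intervals tend to zero. Finally, I would upgrade from special test functions to arbitrary $L^2$ functions by a density argument.

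The main obstacle is the uniformity of the stretching estimates: one must control $S_N\varphi$ and its derivatives simultaneously for all $N$ lying in an exponentially long window $[e^{2q_n}/2, 2e^{2q_n'}]$, and show that the $x$-integral genuinely decays and is not merely bounded. This requires partitioning $\mathbb{T}$ according to the monotonicity intervals of $x\mapsto S_N\varphi(x,y)$ cut out by the condition $\{q_n x\}\in I_n$, and handling the endpoints carefully so that the total variation estimates survive the large number of pieces; the factor $q_n/n$ appearing in the derivative lower bound is precisely what compensates for this multiplicity and yields decay.
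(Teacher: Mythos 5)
This proposition is not proved in the paper at all: it is imported verbatim from Fayad \cite[Proposition~3.3]{fayad_smooth_2006}, so there is no in-paper argument to measure you against. Your outline does follow the standard stretching-of-Birkhoff-sums scheme of that reference and of \cite{fayad_analytic_2002}: reduce mixing to correlation decay for product test functions, place each large time $t$ in one of the (overlapping, not disjoint, so ``unique index $n$'' is inaccurate but harmless) windows covered by the two hypotheses, and on the good set $\{q_nx\}\in I_n$ use the lower bound on $|\partial_x S_m\varphi|$ to show that intervals of length $\sim 1/q_n$ are stretched across many fundamental domains of the roof.

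The genuine gap is precisely the step you label ``the main obstacle'' and then leave unresolved. A pointwise lower bound $|\partial_x S_m\varphi|\ge D$ on a monotonicity interval $J$ with $D|J|\to\infty$ does \emph{not} imply that the pushforward of normalized Lebesgue measure on $J$ under $S_m\varphi$ reduced modulo the roof equidistributes, nor that the corresponding oscillatory integral decays: one can build a monotone $f$ with $f'\ge D$ and total stretch $D|J|$ arbitrarily large whose pushforward mod $1$ charges $[0,\tfrac12]$ with mass $0.9$, simply by letting $f'$ oscillate between $D$ and a much larger value inside each wrap. Equivalently, the van der Corput first-derivative test you invoke requires $1/\partial_xS_m\varphi$ to have bounded variation on $J$ (a bounded number of monotonicity intervals of $\partial_xS_m\varphi$, i.e.\ control of $\partial_x^2S_m\varphi$), or the ``uniform stretch'' condition that the derivative is comparable at any two points of $J$. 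In Fayad's proof this extra input is extracted from the explicit form $\varphi_0=1+\sum_n X_n+Y_n$, for which the dominant contribution to $\partial_xS_m\varphi$ at the relevant scale is a single rescaled cosine with explicitly controlled second derivative; it does not follow from the stated first-derivative bound, and the factor $q_n/n$ compensates for the number of intervals, not for the lack of uniformity within each one. Two further slips: $N_t=\pi(t,x,y,z)$ is emphatically not ``localized to essentially a single value'' --- the Birkhoff sums deviate from $t$ by unboundedly large amounts, and it is exactly because $N_t$ sweeps through many values on each small $x$-interval that mixing occurs --- and equidistribution in the base coordinates (not only in the fiber variable $z$) has to be supplied separately, via unique ergodicity of $R_\omega$.
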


Let us point out that the set of vectors verifying (\ref{eq: rotation_conditions}) is uncountable and dense. For a proof of this see \cite[Appendix 1]{yoccoz_centralisateurs_1995}. 

\subsection{Construction} 
\label{sc: construction}
In the following we fix a vector $\omega = (\Omega, \Omega') \in \R^2$ such that $(1, \omega)$ is non-resonant and denote by $q_n$, $q_n'$ the first return times of $\Omega$ and $\Omega'$ respectively. We further assume that (\ref{eq: rotation_conditions}) holds. Let
\[ \MixColl = \left\{ \varphi \in C^\infty_+(\T^2) \mid \transf{\varphi} \text{ is mixing}\right\}.\]
Denote by $\varphi_0 : \T^2 \rightarrow \R$ the real analytic function given by 
\begin{equation}
\label{eq: varphi_definition}
 \varphi_0(x, y) = 1 + \sum_{n = 1}^{+\infty} X_n(x) + Y_n(y)
 \end{equation}
where
\begin{equation}
\label{eq: X_polynomials}
 X_n(x) = e^{-q_n}\cos(2\pi q_n x), \hone Y_n(y) = e^{-q_n'}\cos(2\pi q_n' y).
 \end{equation}
Applying the mixing criterion in Proposition \ref{thm: mixing_criterion} one can prove the following (see \cite[Theorem 1]{fayad_analytic_2002}).
\begin{prop}
\label{thm: mixing_density}
For any trigonometric polynomial $P : \T^2 \rightarrow \R$ such that $\varphi_0 + P$ is strictly positive \[\varphi_0 + P \in \MixColl\] In particular $\MixColl$ is $C^{\infty}$-dense in $C^\infty_+(\T^2)$.
 \end{prop}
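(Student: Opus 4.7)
The plan is to apply Proposition \ref{thm: mixing_criterion} to $\varphi := \varphi_0 + P$, so the task reduces to verifying the two Birkhoff-sum derivative bounds on $|\partial_x S_m \varphi|$ and $|\partial_y S_m \varphi|$ for $m$ in the prescribed ranges and $(x,y)$ outside small exceptional sets. The strategy is to show that $\partial_x S_m \varphi_0$ and $\partial_y S_m \varphi_0$ are already large enough (this is essentially the content of Fayad's original construction in \cite{fayad_analytic_2002}), and that the perturbation by the polynomial $P$ contributes only a uniformly bounded error, hence negligible as $n$ grows.

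First I would control the error from $P$. Writing $P = \sum_{(j,k) \in \Lambda} c_{j,k} e^{2\pi i (jx + ky)}$ for a finite index set $\Lambda \subset \Z^2 \setminus \{0\}$ (the constant mode does not contribute to any derivative), the geometric-sum identity gives
\begin{equation*}
\partial_x S_m P(x,y) = \sum_{(j,k) \in \Lambda} 2\pi i j\, c_{j,k}\, e^{2\pi i (jx + ky)}\, \frac{e^{2\pi i m (j\Omega + k\Omega')} - 1}{e^{2\pi i (j\Omega + k \Omega')} - 1}.
\end{equation*}
Since $(1, \Omega, \Omega')$ is non-resonant and $\Lambda$ is finite, the denominators are bounded below by a positive constant depending only on $P$ and $\omega$, so there exists $C_P > 0$ with $|\partial_x S_m P(x,y)| \leq C_P$ and $|\partial_y S_m P(x,y)| \leq C_P$, uniformly in $m \in \N$ and $(x,y) \in \T^2$.

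Next I would invoke the main estimates for $\varphi_0$ from \cite{fayad_analytic_2002}. Thanks to the rapid growth $q_n' \geq e^{3 q_n}$ and $q_{n+1} \geq e^{3q_n'}$, for $n$ large enough and $m \in [e^{2q_n}/2, 2 e^{2 q_n'}]$ the dominant contribution to $\partial_x S_m \varphi_0$ is carried by the single block $X_n$: using that $q_n \Omega$ is close to an integer on a geometric scale $\sim 1/q_{n+1}$, one shows that $\partial_x S_m X_n(x,y)$ behaves up to lower-order fluctuations like a constant multiple of $q_n e^{-q_n}\, m\, \sin(2\pi q_n x)$, while all other blocks $X_j$ ($j \neq n$) give negligible contributions in this window of $m$. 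Choosing $I_n$ to be $\T$ minus the two intervals of length $\sim 1/(q_n \sqrt{n})$ around the roots of $\sin(2\pi q_n x)$ (whose total measure tends to zero), we obtain for every $x$ with $\{q_n x\} \in I_n$, every $y \in \T$, and every $m \in [e^{2q_n}/2, 2 e^{2q_n'}]$ a lower bound
\begin{equation*}
|\partial_x S_m \varphi_0(x,y)| \geq 2\,\frac{m}{e^{q_n}}\,\frac{q_n}{n},
\end{equation*}
with the factor $2$ providing slack. The symmetric statement for $\partial_y S_m \varphi_0$ with $q_n'$ and $e^{q_n'}$ in place of $q_n$ and $e^{q_n}$ holds on sets $I_n'$ of the same type.

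Finally, I would combine the two estimates. Since $m/e^{q_n} \geq e^{q_n}/2$ in the relevant range, the required lower bound $\frac{m}{e^{q_n}}\frac{q_n}{n}$ grows super-exponentially in $n$, so for all $n$ large enough $C_P \leq \frac{m}{e^{q_n}}\frac{q_n}{n}$ and the triangle inequality yields $|\partial_x S_m \varphi| \geq \frac{m}{e^{q_n}}\frac{q_n}{n}$; the vertical estimate follows symmetrically. Proposition \ref{thm: mixing_criterion} then gives $\varphi_0 + P \in \MixColl$. For the density claim, given $\psi \in C^\infty_+(\T^2)$, Fej\'er partial sums of $\psi - \varphi_0$ yield trigonometric polynomials $P_k$ with $P_k \to \psi - \varphi_0$ in the $C^\infty$ topology; for $k$ large $\varphi_0 + P_k$ is strictly positive, hence in $\MixColl$, and converges to $\psi$ in $C^\infty$. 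The main obstacle is the dominant-block estimate on $\partial S_m \varphi_0$ under the arithmetic conditions (\ref{eq: rotation_conditions}) — this is the technical core of Fayad's analytic construction, and the novelty here is only the observation that a fixed trigonometric-polynomial perturbation cannot destroy it.
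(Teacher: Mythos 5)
Your proposal follows essentially the same route as the paper, which establishes this proposition by citing Fayad's Theorem 1 in \cite{fayad_analytic_2002} and, for the analogous perturbative statements in the Appendix, uses exactly your decomposition: the fixed polynomial $P$ has uniformly bounded Birkhoff-sum derivatives (this is Lemma \ref{thm: lemma_cohomological} / Corollary \ref{thm: bounds_polynomials}), while the dominant block $X_n$ (resp.\ $Y_n$) supplies the lower bound required by Proposition \ref{thm: mixing_criterion}, and the density claim is handled by polynomial approximation just as you describe. The one quibble is your choice of excluded intervals of length $\sim 1/(q_n\sqrt{n})$ in the $\{q_n x\}$ variable, which only gives $|\sin(2\pi q_n x)|\gtrsim 1/(q_n\sqrt{n})$ and hence a bound weaker than $\tfrac{m}{e^{q_n}}\tfrac{q_n}{n}$ once $q_n \gg \sqrt{n}$; taking the excluded intervals of length $\sim 1/n$ repairs this while still shrinking to zero as the criterion demands.
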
 

Our goal is to prove the following.

\begin{thm}
\label{thm: mixing_LB_existence}
There exist a set $S \subset C^\infty_{+}(\T^2)$, $C^\infty$-dense in 
\[\left\{ \varphi \in C^\infty_{+}(\T^2) \,\,\bigg|\, \int_{\T^2} \varphi(\theta) d\theta = 1 \right\}\]
such that for every $\varphi \in S$ the transformation $\transf{\varphi}$ given by (\ref{eq: normalization}) is mixing and its cartesian product with itself $\cartesian{\varphi}$ is loosely Bernoulli. 
\end{thm}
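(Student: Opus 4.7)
The plan is to build $\varphi$ as the $C^\infty$-limit of a sequence $\varphi_k = \varphi_0 + P_1 + \cdots + P_k$, where each $P_k$ is a trigonometric polynomial with $\|P_k\|_{C^k}$ bounded by an exponentially small quantity, and each partial sum $\varphi_k$ lies in $\MixColl$ by Proposition \ref{thm: mixing_density}. The limit $\varphi$ is then mixing because mixing is preserved along the sequence (by the stability of the hypotheses of Proposition \ref{thm: mixing_criterion} under $C^\infty$-small perturbations), and the $C^\infty$-density of the target set $S$ inside $\{\int \varphi = 1\}$ follows from the freedom in choosing $P_1$ close to any prescribed target, together with the smallness of the subsequent perturbations. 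The normalization $\int \varphi = 1$ is preserved by demanding $\int P_k = 0$ for every $k$.

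At stage $k$ the aim is to verify the hypotheses of Theorem \ref{thm: LB_criterion_transformation_product} for the candidate $\varphi_k$, with estimates robust under later small perturbations. I fix once and for all a refining family $\Pa_n \to \mathcal{E}$ of finite measurable partitions of $\T^3$ (say dyadic cubes) and a sparse subsequence $(n_k)$ of indices, and at each stage I construct Rokhlin towers $\twr_k^{\pm}$ for $\map{\varphi_k}$ as ``flow boxes'' in $\T^3$ corresponding via $\Phi_{\omega,\varphi_k}$ to thin rectangles in $X_{\varphi_k}$, with heights $h_k^{\pm}$ approximately equal to the Birkhoff sums $S_{q_{n_k}}\varphi_k$ and $S_{q_{n_k}'}\varphi_k$ at the base point. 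Since these sums are essentially $q_{n_k}\int \varphi_k \approx q_{n_k}$ and $q_{n_k}'\int \varphi_k \approx q_{n_k}'$, the orbit returns driving the tower construction are those of the underlying translation. The polynomial $P_k$ is then chosen so that on the base of the tower: (i) the two Birkhoff sums are sufficiently close to the integers $h_k^{\pm}$ so that the precision bound $\rho_{\twr_k^{\pm}} \leq \mu_{\twr_k^{\pm}}/(2h_k^+ h_k^-)$ holds; (ii) $h_k^+$ and $h_k^-$ are coprime, an arithmetic condition arranged by perturbing a low-frequency coefficient of $P_k$; (iii) the towers are $\delta^{\pm}_{i,k-i}$-monochromatic with respect to every $\Pa_i$ with $i \leq k$, which is ensured by taking the base thin compared to the mesh of the partitions; and (iv) the sizes $\mu_{\twr_k^\pm}$ decay slowly enough that $\sum (\mu_{\twr_k^+}\mu_{\twr_k^-})^2 = +\infty$. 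All four conditions depend only on Fourier coefficients of $\varphi_k$ up to a finite frequency threshold and on quantities continuous in $C^{r_k}$, so they persist under subsequent $C^\infty$-small corrections.

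The main obstacle is to reconcile the mixing criterion of Proposition \ref{thm: mixing_criterion}, which demands large gradients $|\partial_x S_m\varphi|, |\partial_y S_m\varphi|$ on the scales $m \in [e^{2q_n}/2, 2e^{2q_{n+1}}]$, with the tower construction, which needs $S_{q_{n_k}}\varphi$ and $S_{q_{n_k}'}\varphi$ to be almost integer-valued on a set of positive measure. The key observation, already at work in \cite{fayad_smooth_2006} and \cite{fayad_analytic_2002}, is the huge separation between the two scales: the tower scale $q_{n_k}$ is exponentially smaller than the mixing scale $e^{2q_{n_k}}$, and the Fourier modes $X_n, Y_n$ in $\varphi_0$ producing the gradient bounds needed for mixing at scale $e^{2q_n}$ contribute only at scale $q_n$-oscillations essentially deterministic at the tower scale. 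By selecting $(n_k)$ sparse enough that each $P_k$ is supported on frequencies well above $q_{n_k}$ and well below $q_{n_{k+1}}$, the gradient estimates of Proposition \ref{thm: mixing_criterion} remain intact at every mixing scale while the tower estimates at the smaller scale $q_{n_k}$ are unaffected. The residual bookkeeping, ensuring the error from dropping $P_{k+1}, P_{k+2}, \ldots$ is small enough to preserve all previously arranged properties, is the main technical content of the construction and is the reason each successive $P_k$ must be exponentially smaller than the tolerance thresholds $\delta^{\pm}_{i,k-i}$ extracted from Theorem \ref{thm: LB_criterion_transformation_product}.

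Once the induction is completed, Proposition \ref{thm: mixing_density} and the openness of the gradient conditions give mixing of $\map{\varphi}$, while Theorem \ref{thm: LB_criterion_transformation_product}, applied with the limit towers $\twr_k^{\pm}$, delivers the loose-Bernoullicity of $\cartesian{\varphi}$. Density of $S$ in the prescribed slice is a consequence of starting the construction from an arbitrary $C^\infty$-close trigonometric polynomial perturbation of $\varphi_0$, as afforded by Proposition \ref{thm: mixing_density}.
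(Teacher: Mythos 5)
Your overall architecture matches the paper's: perturb $\varphi_0+P$ iteratively, keep each stage mixing and each arranged property stable under later $C^0$-small corrections (this is Proposition \ref{thm: open_property} together with item 4 of Lemma \ref{thm: recursive_construction}), pass to the limit, and invoke Theorem \ref{thm: LB_criterion_transformation_product}. The gap is in the one place where all the work sits: the actual production of the Rokhlin towers. You propose towers of heights $\approx q_{n_k}$ and $\approx q'_{n_k}$, one per circle factor, with $P_k$ supported on frequencies above $q_{n_k}$ so that the tower estimates ``at the smaller scale $q_{n_k}$ are unaffected'' by the mixing mechanism. The claimed scale separation is illusory. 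The height condition $h^+_nh^-_n>M_{n-1}$ forces the heights to grow without bound, and every sufficiently large scale lies in one of the mixing windows $[e^{2q_n}/2,2e^{2q'_n}]\cup[e^{2q'_n}/2,2e^{2q_{n+1}}]$ of Proposition \ref{thm: mixing_criterion}; there the criterion demands that $\partial_xS_m\varphi$ or $\partial_yS_m\varphi$ be enormous on most of the torus, so $T^{h}$ smears any base with nontrivial extent in the stretched direction and the precision requirement $\rho_{\twr^\pm}\leq\mu_{\twr^\pm}/(2h^+h^-)$ fails. Concretely, a tower of height $\approx q'_{n_k}$ sits squarely in the window $[e^{2q_{n_k}}/2,2e^{2q'_{n_k}}]$, where $|\partial_xS_m\varphi|\gtrsim mq_{n_k}e^{-q_{n_k}}\gg 1$ on most of the circle, so its base can have neither full $x$-extent nor positive size. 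Reconciling this tension is the entire content of the construction, and your proposal offers no mechanism for it.

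The paper's resolution is not frequency separation but surgery at the critical frequency: the mode $X_{m}$ is \emph{replaced} by the plateau polynomial $P_{\mu,m}$ of Proposition \ref{thm: prop_polynomials}, which equals $\mp\eta_m$ up to an error $e^{3q_m/4}/q_{m+1}$ on two sets of measure comparable to $\mu$. On these sets the Birkhoff sums drift linearly, $S_{lq_m}\varphi\approx lq_m\mp lq_m\eta_m$, so the flow boxes $F^\pm=I^\pm\times\T\times[q_m\eta_m/4,3q_m\eta_m/4]$ first return to themselves only after $h^\pm=\eta_m^{-1}\pm1\sim e^{q_m}/q_m$ steps (exponentially larger than $q_m$, not of order $q_{n_k}$ or $q'_{n_k}$), with precision controlled because the $y$-dependent tail $H_m$ of the roof has size $e^{-q'_m}$, far below $1/(h^+h^-)$. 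A single frequency produces both towers; $h^\pm$ are coprime because $\eta_m^{-1}\in2q_m\N$ is even, so both heights are odd and differ by $2$ (no low-frequency coefficient tweak is needed); and the size bound $\mu_{\twr^\pm}>\mu_n=(n+1)^{-1/4}$ is what delivers $\sum(\mu_{\twr_n^+}\mu_{\twr_n^-})^2=+\infty$, a point your sketch states as a goal but never achieves. The price of the surgery is that the stretch estimate is destroyed near the plateaus, so mixing of the limit cannot be obtained by ``stability of the hypotheses'' of Proposition \ref{thm: mixing_criterion}: it must be re-proved for the modified roof using the excluded intervals $I_n$ allowed by Fayad's criterion, which is exactly the ``Furthermore'' clause of Proposition \ref{thm: prop_polynomials}. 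Without the plateau construction, or a substitute for it, the towers your argument relies on do not exist.
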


Let us give an outline of the proof. We prove Theorem \ref{thm: mixing_LB_existence} by showing that for any trigonometric polynomial $P$ of zero average such that $\varphi_0 + P$ is strictly positive the following holds: For all $r \in \N$ and all $\epsilon > 0$ there exist an increasing sequence of natural numbers $m_n$ and a sequence of trigonometric polynomials $P_{m_n}: \T \rightarrow \R$ of zero average and degree at most $q_{m_n}$ obeying
\[ \sum_{n = 1}^{+\infty} \|P_{m_n}\|_{C^r} < \epsilon, \]
for which, redefining $\varphi_0$ as
\[ \overline{\varphi}_0 = \sum_{k = 1}^{+\infty} \overline{X}_k(x) + Y_k(y)\]
where $\overline{X}_k$ is equal to $P_{m_n}$ if $k = m_n$ for some $n \geq 1$ or to $X_k$ otherwise, 
\[\psi = \overline{\varphi}_0 + P\]
is a well defined function in $C^\infty_+(\T^2)$ such that $\transf{\psi}$ is mixing and its cartesian product $\cartesian{\psi}$ is loosely Bernoulli. Notice that up to take $m_1$ sufficiently large we can suppose WLOG that
\[ \| \psi - (\varphi_0 + P) \|_{C^r}= \left\| \sum_{n = 1}^{+\infty} P_{m_n} - X_{m_n} \right\|_{C^r} < 2\epsilon.\]

 The sequences $\{m_n\}_{n \geq 1}$ and $\{P_{m_n}\}_{n \geq 1}$ are defined recursively with respect to a sequence of refining partitions $\Pa_n \rightarrow \mathcal{E}$ set beforehand. We define $m_n,$ $P_{m_n}$ by guaranteeing (Proposition \ref{thm: polynomial_perturbation}), at each step, the existence of Rokhlin towers $\twr(\map{\psi_n}, F_n^\pm, h_n^\pm)$ within the hypotheses of Lemma \ref{thm: product_tower} for the processes $(\map{\psi_n}, \Pa_i)$, for $ i = 0, \dots, n$, where
\[ \psi_n = \varphi_0 + P + \sum_{k = 1}^n (P_{m_k}(x) - X_{m_k}(x)).\]
The towers $\twr(\map{\psi_n}, F_n^\pm, h_n^\pm)$ will be sufficiently monochromatic with respect to the partitions $\Pa_0, \dots, \Pa_n$, and of size $\sim (n + 1)^{-\frac{1}{4}}$. By ensuring that $\psi_{n + 1}$ is sufficiently close to $\psi_n$ at each step, we can also guarantee that $\twr(\map{\psi_m}, F_n^\pm, h_n^\pm)$ are well defined Rokhlin towers for all $m \geq n$. \\

By carefully choosing the sequence $\{ h_n^\pm\}_{n \geq 1},$ we can guarantee that for all $1 \leq n \leq m$ the process $(\map{\psi_m}, \Pa_n)$ and the towers $\{ \twr(\map{\psi_m}, F_i^\pm, h_i^\pm)\}_{1 \leq i \leq n}$ verify (\ref{eq: height_hypothesis_product}) with $l = n - 1$. Taking
\[ \psi = \lim_{n \rightarrow +\infty} \psi_n,\]
the loose-Bernoullicity of $\cartesian{\psi}$ will follow from Theorem \ref{thm: LB_criterion_transformation_product}. By Proposition \ref{thm: mixing_criterion}, the transformation $\transf{\psi}$ will be mixing. \\

 A precise statement of the recursive procedure described above is given in Lemma \ref{thm: recursive_construction}.

\subsection{Proof of Theorem \ref{thm: mixing_LB_existence}}

Let us start by noticing that, under some particular conditions on $\Pa$, the property $P(\Pa, \alpha, \delta, n)$ is open in the space of transformations 
\[\left\{ \transf{\varphi}\right\}_{\varphi \in C_+(\T^2)} \subset \text{Hom}(\T^3)\] endowed with the $C^0$ topology. 
 \begin{prop}
\label{thm: open_property}
Let $\alpha, \delta > 0$, $n \in \N$ and suppose $\Pa$ is a finite partition of $\T^3$ whose elements are either open or have zero Lebesgue measure. Then the set of $\varphi \in C_+(\T^2)$ for which $\transf{\varphi}$ verifies $P(\Pa, \alpha, \delta, n)$ is open in $C_+(\T^2)$.
\end{prop}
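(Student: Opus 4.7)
Fix $\varphi_0 \in C_+(\T^2)$ verifying $P(\Pa, \alpha, \delta, n)$ with witness $W_0 \in \mathcal{E}$. The plan is to shrink $W_0$ to a subset $W$ along whose first $n$ iterates every point sits at a definite distance from every boundary of an open element of $\Pa$, and then to argue that on $W$ both the $\Pa$-$n$-names and the measure are stable under small $C^0$-perturbations of $\varphi_0$.

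The key geometric observation is that, writing $\Pa = \{P_1, \dots, P_m\}$ and letting $O$ index the open elements, any $x \in \partial P_j$ for $j \in O$ cannot belong to another open element (by disjointness and openness), so $\partial P_j$ is contained in the union of the null elements of $\Pa$. Hence $\bigcup_{j \in O} \partial P_j$ has zero Lebesgue measure on $\T^3$, and therefore zero $\meas{\varphi_0}$-measure because the latter has continuous positive density $\varphi_0(\,\cdot\, + \omega)/\int \varphi_0$ with respect to Lebesgue. Let $\mathcal{B}_\eta$ be the $\eta$-neighborhood of $\bigcup_{j \in O}\partial P_j$. I would first intersect $W_0$ with the full-measure set where every iterate $\map{\varphi_0}^i(x)$, $0 \leq i < n$, lies in an open element, and then set
\[
W \;=\; W_0 \,\setminus\, \bigcup_{i = 0}^{n-1} \map{\varphi_0}^{-i}(\mathcal{B}_\eta).
\]
Invariance of $\meas{\varphi_0}$ and absolute continuity let me choose $\eta$ small enough so that $\meas{\varphi_0}(W) > 1 - \delta$ with strict slack, while $\overline{f}_n^\Pa(W;\map{\varphi_0}) \leq \overline{f}_n^\Pa(W_0;\map{\varphi_0}) < 1 - \alpha$. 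For every $x \in W$ and $0 \leq i < n$ the point $\map{\varphi_0}^i(x)$ lies in a unique open element $P_{j_i(x)}$ at distance $> \eta$ from $\partial P_{j_i(x)}$; since $P_{j_i(x)}$ is open and disjoint from the other $P_k$, the open ball of radius $\eta$ around $\map{\varphi_0}^i(x)$ is contained in $P_{j_i(x)}$.

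To close the argument I would invoke continuous dependence of $\map{\varphi}$ and $\meas{\varphi}$ on $\varphi$. Since $\varphi_0$ is bounded away from zero, the time-one special flow map $\tmap{\varphi}$ and the normalization $\Phi_{\omega, \varphi}$ depend continuously on $\varphi \in C_+(\T^2)$, hence $\map{\varphi} \to \map{\varphi_0}$ uniformly on $\T^3$ as $\varphi \to \varphi_0$ in $C^0$. Uniform continuity of $\map{\varphi_0}$ on the compact torus upgrades this to uniform convergence of the iterates $\map{\varphi}^i$ for $0 \leq i < n$. Choosing $\varphi$ close enough to $\varphi_0$ that $\sup_{x \in \T^3}|\map{\varphi}^i(x) - \map{\varphi_0}^i(x)| < \eta$ for every $0 \leq i < n$ forces $\map{\varphi}^i(x) \in P_{j_i(x)}$ for all $x \in W$, so the $\Pa$-$n$-name of $x$ under $\map{\varphi}$ coincides with its name under $\map{\varphi_0}$; consequently $\overline{f}_n^\Pa(W;\map{\varphi}) = \overline{f}_n^\Pa(W;\map{\varphi_0}) < 1 - \alpha$. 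The density of $\meas{\varphi}$ varies continuously with $\varphi$, so also $\meas{\varphi}(W) > 1 - \delta$ for $\varphi$ sufficiently $C^0$-close to $\varphi_0$, completing the verification of $P(\Pa, \alpha, \delta, n)$ for $\transf{\varphi}$.

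The only step that is not entirely formal is the $C^0$-continuity of $\varphi \mapsto \map{\varphi}$: the time-one map of the special flow involves a $\varphi$-dependent number of ceiling crossings in unit time, and the normalization $\Phi_{\omega, \varphi}$ itself depends on $\varphi$. Both issues are handled by the strict lower bound on $\varphi$, which bounds the number of crossings uniformly, but confirming uniform convergence of each piece of the piecewise formula for $\tmap{\varphi}$ after pullback to $\T^3$ is where the routine bookkeeping is concentrated.
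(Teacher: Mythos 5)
Your proposal is correct and follows essentially the same route as the paper: both shrink $W_0$ so that all $n$ iterates of its points lie in open elements of $\Pa$ at a uniform distance from their boundaries (the paper gets the uniform gap from compactness of $W_0$, you get it by excising $\eta$-neighborhoods of the null boundary set), and both then conclude by the $C^0$-continuity of $\varphi \mapsto \map{\varphi}$ and of $\varphi \mapsto \meas{\varphi}$ that the $\Pa$-$n$-names and the measure of the witness set are stable under small perturbations. The continuity step you flag as the only non-formal point is likewise left to a one-line "by continuity" in the paper, with the strict positivity of $\varphi$ doing the same work in both arguments.
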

\begin{proof}
Let us assume $\delta < 1$ otherwise the result is trivial. Let $\varphi \in C_+(\T^2)$ such that $\transf{\varphi}$ verifies $P(\Pa, \alpha, \delta, n)$ and denote by $W_0$ the set associated to this property. Since we consider $\T^3$ endowed with the Borel $\sigma$-algebra we can suppose WLOG, up to slightly reduce the measure of $W_0$, that $W_0$ is a compact set. Furthermore, we can assume that all the iterates of points in $W_0$ belong to open sets of the partition since those who do not verify this condition form a set of zero Lebesgue measure (recall that $\meas{\varphi}$ is equivalent to the Lebesgue measure on $\T^3$). By continuity there exists $\epsilon > 0$ such that for any $\psi \in C_+(\T^2)$ obeying 
\[\|\psi - \varphi \|_{C^0} < \epsilon\] 
 we have $\meas{\psi}(W) > 1 - \delta$ and for all $x \in W$ the $\Pa$-$n$-names associated to $x$ with respect to $\transf{\varphi}$ and $\transf{\psi}$ coincide. Therefore $\transf{\psi}$ verifies $P(\Pa, \alpha, \delta, n)$.
\end{proof}

Let us introduce the family of polynomials to be used in the recursive procedure described in Section \ref{sc: construction}. The class we define here is a slight modification of the one considered in \cite[Proposition 3.5]{fayad_smooth_2006} by Fayad.

\begin{prop}
\label{thm: prop_polynomials}
Let $r\in \N$ and $0 < \mu < \frac{1}{4}$. For all $n \in \N$ sufficiently large there exists
\[ \frac{q_ne^{-q_n}}{2} \leq \eta_n \leq \frac{3q_ne^{-q_n}}{4}\]
and a trigonometric polynomial $P_{\mu, n}$ of zero average and degree at most $q_{n+1}$ satisfying the following:
\begin{enumerate}
 \item $\frac{1}{\eta_n} \in 2q_n\N$,
 \item $\| P_{\mu, n}\|_{C^r} \leq e^{-3q_n/4}$,
 \item $|P_{\mu, n}(x) \mp \eta_n| \leq \frac{e^{3q_n/4}}{q_{n+1}}$ for $|\{q_nx\} -\frac{1}{2} \pm \frac{1}{4}| < 3\mu,$
 \item $\mp P_{\mu, n}'(x) \geq \frac{q_n^2}{e^{q_n}}$ for $|\{q_nx\} - \frac{1}{2} \pm \frac{1}{4}| > 4\mu.$
\end{enumerate}
Furthermore, for any increasing sequence $\{m_n\}_{n \in \N} \subset \N$ and any decreasing sequence $\mu_n \searrow 0$ such that
\[\psi = \varphi_0 + P + \sum_{n = 1}^{+\infty} \left( P_{\mu_n, m_n}(x) - X_{m_n}(x) \right)\]
is a well defined function in $C^\infty_+(\T^2)$ the transformation $\transf{\psi}$ is mixing. 
\end{prop}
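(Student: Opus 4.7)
The plan is to construct $P_{\mu, n}$ as a trigonometric smoothing of a step-like profile scaled to frequency $q_n$. Fix once and for all a smooth $1$-periodic function $g : \T \to \R$ with zero mean, satisfying $g \equiv -1$ on $(1/4 - 3\mu, 1/4 + 3\mu)$, $g \equiv +1$ on $(3/4 - 3\mu, 3/4 + 3\mu)$, and monotonically transitioning between the plateaus so that $|g'(\theta)| \geq c_0 \geq 4$ whenever $\min(|\theta - 1/4|, |\theta - 3/4|) \geq 4\mu$. Let $M_n = \lfloor q_{n+1}/q_n \rfloor$ and let $\tilde g_n$ be any zero-mean trigonometric polynomial of degree $\leq M_n$ with $\|\tilde g_n - g\|_{C^1} \leq M_n^{-10}$ (such an approximation exists by standard Jackson-type estimates since $g$ is $C^\infty$). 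Define
\[ P_{\mu, n}(x) = \eta_n \, \tilde g_n(q_n x),\]
which is a zero-mean trigonometric polynomial in $x$ of degree at most $q_n M_n \leq q_{n+1}$.

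Pick $\eta_n = 1/(2 q_n k_n)$ for any integer $k_n \in [\lceil 2 e^{q_n}/(3 q_n^2)\rceil, \lfloor e^{q_n}/q_n^2 \rfloor]$, which is non-empty for $n$ large; then item (1) holds and $\eta_n \in [q_n e^{-q_n}/2, 3 q_n e^{-q_n}/4]$. For item (2), the chain rule gives $\|P_{\mu, n}\|_{C^r} \leq \eta_n q_n^r (\|g\|_{C^r} + 1) \leq C(\mu, r)\, q_n^{r+1}\, e^{-q_n}$, which is at most $e^{-3 q_n/4}$ for $n$ large. For item (3), on the plateaus $g(q_n x) = \pm 1$, so
\[ |P_{\mu, n}(x) \mp \eta_n| = \eta_n\, |\tilde g_n(q_n x) - g(q_n x)| \leq \eta_n\, M_n^{-10}, \]
which is dominated by $e^{3 q_n/4}/q_{n+1}$ with enormous room to spare thanks to the super-exponential growth $q_{n+1} \geq e^{3 e^{3 q_n}}$ from (\ref{eq: rotation_conditions}). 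For item (4), differentiating and using $\|\tilde g_n' - g'\|_\infty \leq M_n^{-10}$ gives, on the complement of the $4\mu$-neighborhoods of $\{1/4, 3/4\} + q_n^{-1}\Z$,
\[ |P_{\mu, n}'(x)| = \eta_n q_n\, |\tilde g_n'(q_n x)| \geq \eta_n q_n\, (c_0 - M_n^{-10}) \geq 2 \eta_n q_n \geq q_n^2 e^{-q_n}, \]
and the sign of $P_{\mu, n}'(x)$ is inherited from that of $g'(q_n x)$ on each monotone branch, matching the $\mp$ convention in the statement.

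For the mixing assertion, we apply Proposition \ref{thm: mixing_criterion} to $\psi$. The polynomial $P_{\mu_n, m_n}$ reproduces the structural features of the polynomials built in \cite[Proposition 3.5]{fayad_smooth_2006}: outside a set of measure $O(\mu_n)$ we have $|P_{\mu_n, m_n}'(x)| \geq q_{m_n}^2/e^{q_{m_n}}$, and the near-plateau estimate from item (3) controls the resonant contributions to $\partial_x S_m \psi$. Combining this with Fayad's Birkhoff-sum analysis yields the required lower bound $|\partial_x S_m \psi(x,y)| \geq (m/e^{q_{m_n}})(q_{m_n}/n)$ on the set $I_n$ defined as the circle minus two intervals whose lengths tend to zero with $\mu_n$. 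The $\partial_y S_m \psi$ bound is inherited from the unchanged $Y_k(y)$ contributions since each perturbation $P_{\mu_n, m_n}(x) - X_{m_n}(x)$ is independent of $y$. I expect the principal obstacle to be the careful reconciliation of the sign and plateau structure encoded by the $\mp$ conventions with the resonance-avoidance mechanism of \cite{fayad_smooth_2006}: one must ensure that the ``square-wave-like'' profile keeps $|\partial_x S_m \psi|$ above the target on a set of measure approaching one, while the $C^r$ norm of the increment $P_{\mu_n, m_n} - X_{m_n}$ remains summable, a balance that is made possible only by the double-exponential gap between $q_n$ and $q_{n+1}$.
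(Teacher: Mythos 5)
Your construction is essentially the paper's: both build a $\tfrac{1}{q_n}$-periodic profile with plateaus $\pm\eta_n$ near $\{q_nx\}=\tfrac14,\tfrac34$ and steep monotone transitions whose slope is of order $q_n\eta_n$, then pass to a trigonometric polynomial of degree below $q_{n+1}$ (the paper mollifies a piecewise-linear tent at scale $(n^2q_n)^{-1}$ and truncates its Fourier series, while you rescale a fixed smooth template and invoke a Jackson-type estimate --- an immaterial difference), and both treat the mixing assertion by deferring to Fayad's criterion, with the slope bound in item (4) supplying the required stretching of the Birkhoff sums $\partial_x S_m\psi$ along arithmetic progressions of step $q_n$. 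The only slip is that your $g$ has the plateau signs reversed relative to the $\mp$ convention of item (3) (one needs $P_{\mu,n}\approx+\eta_n$ where $\{q_nx\}$ is near $\tfrac14$), which is fixed by replacing $g$ with $-g$.
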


The proof of this Proposition is completely analogous to that of Proposition 3.5 and Theorem 3.1 in \cite{fayad_smooth_2006} so we defer it to the Appendix. Using the family of polynomials just defined we will prove the following. 

\begin{prop}
\label{thm: polynomial_perturbation}
Let $P$ be a trigonometric polynomial with zero average such that $\varphi_0 + P$ is strictly positive. Suppose $\Pa$ is a finite partition of $\T^3$ whose elements are either open or have zero Lebesgue measure. For all $r, n, N \in \N$ and all $0 < \beta, \mu, \epsilon < \frac{1}{10}$ there exists $m \in \N$ obeying 
\[ |q_m^r e^{-q_m}| < \epsilon \]
such that for $P_{\mu, m}$ as defined in Lemma \ref{thm: prop_polynomials}
\[\varphi(x, y) = \varphi_0(x, y) + P(x, y) + P_{\mu, m}(x) - X_m(x) \]
is a well defined function in $C^\infty_+(\T^2)$ and the transformation $\transf{\varphi}$ admits two Rokhlin towers $\twr^\pm$ with closed sets as floor sets obeying
\[ \Delta(\twr^{\pm}, \Pa_{n}) \leq \beta, \hspace{0.8cm} \mu_{\twr^\pm} > \mu, \hspace{0.8cm} h_{\twr^-} = h_{\twr^+} - 2 > N, \hspace{0.8cm} \rho_{\twr^{\pm}} < \frac{\mu}{2(h_{\twr^+})^2}. \]
\end{prop}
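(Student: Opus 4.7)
The plan is to exploit the step-function structure of $P_{\mu, m}$ provided by Proposition~\ref{thm: prop_polynomials}: on the sets
\[
I^{\pm} := \{x \in \T : |\{q_m x\} - 1/2 \mp 1/4| \leq 2\mu\},
\]
$P_{\mu, m}$ is within $e^{3q_m/4}/q_{m+1}$ of the constants $\pm \eta_m$. The candidate roof $\varphi = \varphi_0 + P + P_{\mu, m} - X_m$ then equals $1 \pm \eta_m + o(1)$ on $I^{\pm} \times \T$ as $m \to \infty$. I would first choose $m$ large enough that $q_m^r e^{-q_m} < \epsilon$ and that $\|P_{\mu, m}\|_{C^0}$ is below $\min(\varphi_0 + P)$; using property (2) of Proposition~\ref{thm: prop_polynomials}, this places $\varphi$ in $C^\infty_+(\T^2)$.

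Setting $K := \tfrac{1}{2 q_m \eta_m} \in \N$ (from property (1)), I would take as tower base in the special flow space $X_\varphi$ the closed set $\widetilde F^\pm = (I^\pm \times J^\pm) \times [0, \delta]$, where $J^\pm \subset \T$ is a closed interval and $\delta \in (0, 1)$ is to be fixed; the corresponding closed floor set in $\T^3$ for $\map{\varphi}$ is $F^\pm := \Phi_{\omega, \varphi}^{-1}(\widetilde F^\pm)$. The candidate tower heights are $h^{\pm} := 2 K q_m \pm 1$, justified by the Birkhoff sum computation: because $\|q_m \Omega\| \leq 1/q_{m+1}$, the iterates $R_\Omega^j x$ for $0 \leq j < 2K q_m$ have $\{q_m R_\Omega^j x\}$ drifting by at most $2 K q_m / q_{m+1}$, which is much smaller than $\mu$ by (\ref{eq: rotation_conditions}). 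So all iterates remain in $I^\pm$, and property (3) of Proposition~\ref{thm: prop_polynomials} yields $\sum_{j=0}^{2 K q_m - 1} P_{\mu, m}(R_\Omega^j x) = \pm 1 + o(1)$. The remaining terms (all with Fourier frequencies non-resonant with $R_\omega$ at this scale) contribute $2 K q_m + O(1)$, giving $S_{2 K q_m} \varphi = 2 K q_m \pm 1 + o(1)$ on $I^\pm \times \T$ and hence $h^\pm > N$ for $m$ large.

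It remains to verify the four listed properties. \emph{Disjointness of levels} holds because the iterates $R_\omega^k$ of the base have $x$-projection still in $I^\pm$ (by the drift estimate) but $y$-projection $R_{\Omega'}^k J^\pm$; disjointness for $1 \leq k < 2 K q_m$ then follows from choosing $|J^\pm| < \|q_{l-1}' \Omega'\|$, where $q_l'$ is the smallest denominator of $\Omega'$ exceeding $2 K q_m$, which exists in the correct range by (\ref{eq: rotation_conditions}). The \emph{size estimate} $\mu_{\twr^\pm} \approx h^\pm \cdot 4 \mu \cdot |J^\pm| \cdot \delta$ exceeds $\mu$ since $h^\pm$ is of order $e^{q_m}/q_m$. \emph{Monochromaticity} with respect to $\Pa_n$ follows because $\Pa_n$ has open or null elements, so its topological boundary is a compact $\mu$-null set; for $|J^\pm|, \delta$ sufficiently small, the total measure of the tower levels touching this boundary is below $\beta$.

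The principal obstacle is the \emph{precision bound} $\rho_{\twr^{\pm}} < \mu / (2 h_{\twr^+}^2)$. After $h^\pm$ iterates of $\map{\varphi}$, the floor $F^\pm$ is displaced by a $y$-translation of size $\{2 K q_m \Omega'\}$, giving $\rho_{\twr^\pm} \lesssim \{2 K q_m \Omega'\} \cdot 4\mu \cdot \delta$, so the required bound forces $\{2 K q_m \Omega'\}$ to be drastically smaller than $|J^\pm|/h^\pm$. The flexibility in $\eta_m$ (which lets $K$ range over an arithmetic progression of length $\sim e^{q_m}/q_m^2$) combined with the interlacing of denominators of $\Omega'$ between consecutive denominators of $\Omega$ supplied by (\ref{eq: rotation_conditions}) allows one to pick a value of $K$ for which $2 K q_m$ is an especially good simultaneous denominator of $\Omega$ and $\Omega'$. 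The final choice of $m$ is dictated by balancing all these error terms.
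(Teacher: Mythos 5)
Your proposal gets several essential ingredients right — the sets $I^\pm$ where $P_{\mu,m}\approx\pm\eta_m$, the heights $h^\pm=\eta_m^{-1}\pm1$, and the Birkhoff-sum computation $S_{\eta_m^{-1}}^{R_\omega}\varphi\approx\eta_m^{-1}\pm1$ that justifies them — but the way you obtain disjointness of the levels creates a fatal circularity. You restrict the base to $I^\pm\times J^\pm\times[0,\delta]$ and derive disjointness from the $y$-coordinate by requiring $|J^\pm|<\vvvert q'_{l-1}\Omega'\vvvert$, where $q'_l$ is the smallest denominator of $\Omega'$ exceeding $h^\pm\approx e^{q_m}/q_m$. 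Under (\ref{eq: rotation_conditions}) the denominators of $\Omega'$ jump from $q'_{m-1}\leq\tfrac13\log q_m$ straight to $q'_m\geq e^{3q_m}$, so $q'_l=q'_m$ and your constraint forces $|J^\pm|<\vvvert q'_{m-1}\Omega'\vvvert\approx 1/q'_m\leq e^{-3q_m}$. The tower size is then at most $h^\pm\cdot4\mu\cdot|J^\pm|\lesssim \mu\, e^{-2q_m}/q_m$, which is nowhere near the required lower bound $\mu_{\twr^\pm}>\mu$ (that bound needs $|J^\pm|\delta\gtrsim\eta_m\approx q_me^{-q_m}$). Your size estimate silently ignores the constraint your own disjointness argument imposes on $|J^\pm|$. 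The precision "obstacle" you flag is likewise an artifact of restricting $y$: since there are no denominators of $\Omega'$ anywhere near $e^{q_m}$, no choice of $K$ in the admissible arithmetic progression can make $\vvvert 2Kq_m\Omega'\vvvert$ as small as the needed $\sim\mu/(2(h^+)^2)\approx q_m^2e^{-2q_m}$, so the hand-waved "especially good simultaneous denominator" does not exist.

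The paper's construction dissolves all three difficulties at once by taking the floor set to be a \emph{full circle in $y$} and a \emph{thin slab in the flow direction}: $F^\pm=I^\pm\times\T\times[\tfrac{q_m\eta_m}{4},\tfrac{3q_m\eta_m}{4}]$, with $I^\pm$ a single interval of length $4\mu/q_m$. Disjointness then comes from the flow coordinate, not from $y$: if some level returned to $F^\pm$ before time $h^\pm$, the $x$-coordinate returning to $I^\pm$ forces the number of base steps to be $lq_m$ with $0<l<e^{q_m}$, and the estimate $|S^{R_\omega}_{lq_m}\varphi-lq_m\pm lq_m\eta_m|<e^{-2q_m}$ shows the $z$-coordinate lands outside the slab of height $q_m\eta_m/2$ for every such $l$ except $lq_m=\eta_m^{-1}$. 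The size is $h^\pm\cdot\tfrac{4\mu}{q_m}\cdot\tfrac{q_m\eta_m}{2}\approx2\mu>\mu$, and the precision involves only $\vvvert\eta_m^{-1}\Omega\vvvert\leq\eta_m^{-1}/q_{m+1}$ (using $\eta_m^{-1}\in2q_m\N$) and the $e^{-2q_m}$ Birkhoff-sum error — the displacement by $\Omega'$ never enters because the base is invariant under $y$-translation. To repair your argument you would need to abandon the interval $J^\pm$ and the order-one slab $[0,\delta]$, and instead let the $z$-slab of height $\sim q_m\eta_m$ do the separating, as above.
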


\begin{proof}
 Let $m \in \N$ and define
\[ L_m = \sum_{n = 1}^{m - 1} X_n(x) + Y_n(y) , \hone H_m = Y_m(y) + \sum_{n > m} X_n(x) + Y_n(y). \]
Thus we can express $\varphi$ in the statement as 
\begin{equation}
\label{Decomposition}
\varphi = 1 + P + L_m + H_m + P_{\mu, m}.
\end{equation}
Take $m$ sufficiently large so that 
\[\deg(P) \leq q_{m-2}, \hone q_m^r e^{-q_m} < \epsilon, \hone \frac{e^{q_m}}{q_m^2} > N.\]
Let 
\[I^\pm = \left\{ x \in \T \, \left| \, \big|x - \tfrac{1}{2q_m} \pm \tfrac{1}{4q_m}\big|\right. \leq \tfrac{2\mu}{q_m}\right\}\]
 and define 
\[ F^\pm = I^\pm \times \T \times \left[\tfrac{q_m \eta_m}{4}, \tfrac{3q_m \eta_m}{4}\right],\hone h^\pm = \eta_m^{-1} \pm 1,\]
with $\eta_m$ as defined in Lemma \ref{thm: prop_polynomials}. We claim that for $m$ sufficiently large
\[\twr^\pm = \twr( F^\pm, h^\pm)\]
 are well defined Rokhlin towers for the map $\transf{\varphi}$ satisfying the desired conditions. We will prove this only for $F^-$ as the proof for $F^+$ is analogous. \\
 
 First, notice that $h^- > N$ by our initial hypothesis on $m$. Suppose by contradiction that there exists $0 < k < h^-$ for which $T^k(F^-) \cap F^- \neq \emptyset$. Then there exist $(x,y,z) \in F^-$ such that for $n = \pi(k, x, y, z)$
\begin{equation}
\label{eq: contradiction_assumptions}
x + n\Omega \in I^-, \hone 0\leq k - S_{n}^{R_\omega}\varphi(x,y) \leq \frac{q_m\eta_m}{2}.
\end{equation}
Let $0 < c < 1$ such that $\varphi + P > c$. Notice that $n \leq hc^{-1}$ and take $m$ sufficiently large so that $hc^{-1} < q_{m+1}$. As $x + n\Omega \in I^-$ then 
\[\vvvert n \omega \vvvert \leq \frac{4\mu}{q_m}\] which implies $n > q_{m - 1}$. Since $n \leq ha^{-1} < q_{m + 1}$ it follows that $n = l q_m$ for some $l \in \N$. By definition of $\eta_m$ if follows that $l < e^{q_m}$ for $m$ sufficiently large. \\

\textbf{Claim.} For all $x \in I^-,$ $y \in \T,$ and $0 < l < e^{q_m}$
 \begin{equation}
 \label{eq: Birkhoff_sum_approximation}
 |S_{lq_m}^{R_\omega} \varphi (x,y) - lq_m + lq_m\eta_m| < \frac{1}{e^{2q_m}}
 \end{equation}
 for $m$ sufficiently large. 
\begin{proof}[Proof of the Claim] We decompose $\varphi$ as in (\ref{Decomposition}) and calculate each Birkhoff sum separately. Since $P$ and $L_m$ are polynomials of degree less than $q_{m - 1},$ and $q_m$ respectively, by Corollary \ref{thm: bounds_polynomials} there exist a positive constant $C_0$ such that
\[ \|S_{lq_m}^{R_\omega}P\|_{C^0} \leq \frac{lC_0}{q_{m + 1}}, \hone \|S_{lq_m}^{R_\omega}L_m\|_{C^0} \leq \frac{lC_0q_m}{q_{m + 1}}.\]
Given $x \in I^-$ and $0 < l < e^{q_m}$ 
\[ \left|\{ q_m(x + l\alpha)\} - \frac{3}{4}\right| < 2\mu + \frac{l}{q_{m+1}} < 3\mu\]
for sufficiently large $m$. Thus by condition $3$ of Lemma \ref{thm: prop_polynomials}
\begin{align*}
|S_{lq_m}^{R_\omega}P_{\mu, m}(x) + lq_m\eta_m| \leq \dfrac{le^{3q_m/4}}{q_{m+1}}.
\end{align*}
Notice that $\| H_m\|_{C^0} \leq 2e^{-q_m'}$ for $m$ sufficiently large. Let $x \in I^-$, $y \in \T$ and $0 < l < e^{q_m}$. Then
\begin{align*}
 |S_{lq_m}^{R_\omega}\varphi (x,y) - lq_m + lq_m\eta_m |& \leq |S_{lq_m}^{R_\omega}P_{\mu, m}(x,y) + lq_m \eta_m| \\
 & \quad + |S_{lq_m}^{R_\omega}P| + |S_{lq_m}^{R_\omega}L_m| + |S_{lq_m}^{R_\omega}H_m| \\ 
 & \leq \dfrac{le^{3q_m/4}}{q_{m+1}} + \frac{lC_0}{q_{m + 1}} + \frac{lC_0q_m}{q_{m + 1}}+ \dfrac{2lq_m}{e^{q_m'}} \\
 & < \dfrac{1}{e^{2q_m}}
 \end{align*}
 for sufficiently large $m$.
 \end{proof}
 
By (\ref{eq: contradiction_assumptions}) and the previous Claim
\begin{equation}
\label{eq: bounds_k}
lq_m - lq_m\eta_m- e^{-2q_m} < k < lq_m- lq_m\eta_m + e^{-2q_m} + \frac{q_m\eta_m}{2}
\end{equation}
Since $k \leq \eta_m^{-1} - 2$ the LHS of the previous equation yields to
\[ l q_m \eta_m \leq 1 - \frac{\eta_m}{1 - \eta_m}(1 - e^{-2q_m}).\]
Thus, by applying the previous inequality in the LHS of (\ref{eq: bounds_k}) it follows that
\[ k > lq_m - 1\]
for $m$ sufficiently large. But the RHS of (\ref{eq: bounds_k}) implies 
\[ k < lq_m\]
for $m$ sufficiently large, which is a contradiction since $k$ is a natural number. Therefore $\twr^- = \twr(F^-, h^-)$ is a well defined Rokhlin tower for $\transf{\varphi}$. Furthermore 
 \[ \mu_{\twr^-} = h^- \mu(F^-) = \dfrac{1 - \eta_m}{\eta_m} \dfrac{3\eta_m\mu}{2} > \mu.\]
Let $(x,y,z) \in F^{-}$. Taking $l = (\eta_m q_m)^{-1}$ in (\ref{eq: Birkhoff_sum_approximation}) we obtain
\[ \left| h^- - S^{R_\omega}_{h^- + 1} \varphi(x, y)\right| < \dfrac{1}{e^{2q_m}}.\]
Hence $\pi(h^-, x, y, z) =h^- + 1 = \eta_m^{-1}$ and
\begin{align*}
\left| \map{\varphi}^{h^-}(x,y,z) - (x,y,z)\right| &= \left(\vvvert \eta_m^{-1}\Omega \vvvert, \vvvert \eta_m^{-1} \Omega' \vvvert, h^- - S^{R_\omega}_{h^- + 1} \varphi(x, y) \right).
\end{align*}
Since $\vvvert \eta_m^{-1}\Omega \vvvert \leq \frac{\eta_m^{-1}}{q_{m + 1}}$ it easily follows that 
\[\rho_{\twr^-} < \dfrac{\mu}{2(h^+)^2}\]
for sufficiently large $m$. It remains to show $d(\twr^-, \Pa) \leq \beta$. For this consider an open cover of $\T^3$ 
\[\mathcal{U} = \{ \mathring{P} \mid P \in \Pa, \mathring{P} \neq \emptyset \} \cup \{ U\}\]
where $U$ is an open set of $\mu_{\varphi_0 + P}$-measure less than $\frac{\beta}{2}$ and let $a$ be the Lebesgue number of this cover. The existence of such cover follows directly from the hypotheses on $\Pa$. We claim that for $m$ sufficiently large the diameter of every level of the towers $\twr^-$ is smaller than $a$. By (\ref{eq: Birkhoff_sum_approximation}) for all $0 \leq l < e^{q_m}$ 
\begin{align*}
diam(\map{\varphi}^{lq_m}(F^-)) &\leq q_m\eta_m+ \dfrac{1}{e^{2q_m}} + \dfrac{3\mu}{q_m}\| \varphi'\|_{C^0} \\
& \leq \dfrac{2q_m^2}{e^{q_m}} + \dfrac{1}{e^{2q_m}} + \dfrac{6\mu}{q_m} \\
& \leq \dfrac{10\mu}{q_m}
\end{align*}
for $m$ sufficiently large. By definition of $\twr^-$ and by Corollary \ref{thm: bounds_polynomials}, there exists a positive constant $C$, such that for all $0 < r < q_m$
\begin{align*}
 diam( & \map{\varphi}^{lq_m + r}(F^-)) \leq diam(\map{\varphi}^{lq_m}(F^-))\Big( 1 + \max_{0 \leq i < q_m} \| S_i\varphi' \|_{C^0} \Big) \\
& \leq \dfrac{10\mu}{q_m} \bigg( 1 + \max_{0 \leq i < q_m} \| S_iP' \|_{C^0} + \max_{0 \leq i < q_m} \| S_iL_{m - 1}' \|_{C^0} \\
& \quad + q_m\|X_{m - 1} + Y_{m - 1} + P_{\mu, m} + H_m\|_{C^1} \bigg) \\ 
& \leq \dfrac{10\mu}{q_m} \left( 1 + C + Cq_{m - 1} + q_m\left( \frac{q_{m-1}}{e^{q_{m - 1}}} + \frac{q_{m-1}'}{e^{-q_{m - 1}'}} + \dfrac{q_m^4}{e^{q_m}} + \dfrac{2q_m}{e^{q_m'}}\right) \right) \\
& \leq a
\end{align*}
for $m$ sufficiently large. Therefore every level of the tower $\twr^-$ is contained in at least one element of the open cover $\mathcal{U}$. By definition of $\mathcal{U}$ it easily follows that
\[ \Delta(\twr^-, \Pa) < \meas{\varphi}(U)\]
 Since $\mu_{\omega, P + \varphi_0} (U) < \frac{\beta}{2}$ we have $\meas{\varphi}(U) < \beta$ for $m$ sufficiently large. 
 \end{proof}

We now have all the tools to prove Theorem \ref{thm: mixing_LB_existence}. 

\begin{proof}[Proof of Theorem \ref{thm: mixing_LB_existence}] 
Let $\varphi \in C^\infty_+(\T^2)$ with average equal to one, $r \in \N$ and $\epsilon > 0$. We will define $\psi \in C^\infty_+(\T^2)$ with average equal to one such that 
\[\| \varphi - \psi\|_{C^r} < \epsilon,\]
 $\transf{\psi}$ is mixing and $\cartesian{\psi}$ is Loosely Bernoulli. We will obtain $\psi$ as the limit of a recursive procedure. By Proposition \ref{thm: mixing_density} there exists a trigonometric polynomial $P_0$ with zero average such that $\psi_0 = \varphi_0 + P_0 \in \MixColl$ and 
 \[\| \varphi - \psi_0\|_{C^r} < \frac{\epsilon}{2}.\] 
We define a sequence $(\Pa_n)_{n \in \N}$ of refining partitions as follows. For all $n \in \N$, $\Pa_n$ is the partition of $\T^3$ into $2^{3n}$ open cubes of the form
\[ \left(\tfrac{i}{2^n}, \tfrac{i + 1}{2^n}\right) \times \left(\tfrac{j}{2^n}, \tfrac{j + 1}{2^n}\right) \times \left(\tfrac{k}{2^n}, \tfrac{k + 1}{2^n}\right), \]
for some $i, j, k \in \N$, together with a zero Lebesgue measure set given by the union of the boundaries of the cubes. The order of the elements of each partition is not relevant. Clearly $\Pa_n \rightarrow \mathcal{B}(\T^3)$. Let $\mu_n = (n + 1)^{-\frac{1}{4}}$ and define 
\[ \alpha_0 = 0, \hone \alpha_{n + 1} = \alpha_{n} + \delta_n, \hone \delta_n = \frac{\mu_n^4}{4}\left( \frac{1 - \alpha_n}{10} \right)^2.\]
 for all $n \in \N$. Propositions \ref{thm: open_property}, \ref{thm: prop_polynomials}, \ref{thm: polynomial_perturbation} imply the following. \\

\begin{lem}
\label{thm: recursive_construction}
There exist closed sets $\{F_n^\pm\}_{n \geq 1} \subset \mathcal{E}$, sequences $\{ h_n^\pm\}_{n \geq 1} \subset \N$, $\{ N_{n, k}\}_{n, k \in \N} \subset \N$, $\{m_n\}_{n \geq 1} \subset \N$, $\{\epsilon_n\}_{n \in \N} \subset \R_+$ with $m_n$ increasing and $\epsilon_n$ decreasing, such that for all $0 \leq k < l \leq n$, denoting
\[ P_l = P_{\mu_l, m_l} - X_{m_l},\] with $P_{\mu_l, m_l}$ as in Proposition \ref{thm: prop_polynomials}, and 
\[ \psi_n(x, y) = \varphi_0(x, y) + P_0(x, y) + P_1(x) +\dots + P_n(x), \]
the following holds:
\begin{enumerate}
\item \label{mixing} $\psi_n \in \MixColl.$
\item \label{smallness} $\| P_n\|_{C^{r + n}} \leq \epsilon_{n - 1} \leq \tfrac{1}{2^n}.$
\item $ \label{height_condition_verified} h_n^+ = h_n^- + 2,$ and $h_{n}^+ h_{n}^- > N_{k, n - k}.$
\item \label{thm_hypotheses} For all $\psi \in \MixColl$ obeying
\[ \| \psi - \psi_n\|_{C^0} \leq \epsilon_n,\]
$\twr_l^\pm = \twr(\map{\psi}, F_l^\pm, h_l^\pm)$ are well defined Rokhlin towers, $\delta_{l}$ - monochromatic with respect to $\Pa_k$ obeying
\[\mu_{\twr_l}^\pm \geq \frac{1}{l^{1/4}}, \hone \rho_{\twr^\pm_l} < \frac{\mu_{\twr_l}}{2(h^{+}_{_l})^2}.\]
Moreover, $(\map{\psi},\Pa_k)$ and $\mathfrak{T} = \{ \twr_i\}_{k \leq i < n}$ verify the hypotheses of Proposition \ref{thm: LB_criterion_towers_product}. For $0 \leq i \leq n - k$, the numbers $N_i(\map{\psi},$ $\Pa_k, \twr_k^\pm, \dots, \twr_{k + i - 1}^\pm)$ given by Proposition \ref{thm: LB_criterion_towers_product} can be taken as $N_{k, i}.$
\end{enumerate}
\end{lem}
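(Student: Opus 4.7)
The plan is to perform a simultaneous recursive construction on $n$, invoking Proposition \ref{thm: polynomial_perturbation} at each step to produce the new pair of towers, and then using the openness assertion of Proposition \ref{thm: open_property} (together with the $C^0$-continuity of the assignment $\varphi \mapsto (\map{\varphi}, \meas{\varphi})$ given by (\ref{eq: normalization})) to pick $\epsilon_n$ small enough that every previously built tower and monochromaticity estimate survives for all $\psi \in \MixColl$ with $\|\psi - \psi_n\|_{C^0} \leq \epsilon_n$. The base case $n=0$ is provided by setting $\psi_0 = \varphi_0 + P_0$, which lies in $\MixColl$ by Proposition \ref{thm: mixing_density}; no towers are required at this level.

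\textbf{Inductive step.} Suppose $\psi_{n-1}$, the towers $\{\twr_l^\pm\}_{1 \leq l \leq n-1}$, the reals $\{\epsilon_l\}_{0 \leq l \leq n-1}$ and the integers $\{N_{k,i}\}_{k+i \leq n}$ have been constructed so that (1)--(4) hold through level $n-1$. Set $\bar N_n = \max_{0 \leq k \leq n} N_{k,n-k}$. The function
\[ Q_{n-1} := \psi_{n-1} - \varphi_0 = P_0 + \sum_{l=1}^{n-1}\bigl(P_{\mu_l, m_l}(x) - X_{m_l}(x)\bigr)\]
is a trigonometric polynomial of zero mean with $\varphi_0 + Q_{n-1} = \psi_{n-1} > 0$, so we may apply Proposition \ref{thm: polynomial_perturbation} to $Q_{n-1}$ with parameters $(r,n,N) \gets (r+n,\, n,\, \bar N_n)$, $\beta = \min_{0 \leq k \leq n}\delta_{k,n-k}$, $\mu = \mu_n$, and $\epsilon$ small enough that the resulting $P_n := P_{\mu_n, m_n} - X_{m_n}$ satisfies $\|P_n\|_{C^{r+n}} \leq \epsilon_{n-1} \leq 2^{-n}$; this is feasible since the $C^{r+n}$-norms of $P_{\mu_n, m_n}$ and $X_{m_n}$ both vanish as $m_n \to \infty$, and we also insist on $m_n > m_{n-1}$. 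This yields closed floor sets $F_n^\pm$ and heights $h_n^+ = h_n^- + 2$ with $h_n^+ h_n^- > \bar N_n^2 \geq \bar N_n$, such that for $\psi_n := \psi_{n-1} + P_n$ the towers $\twr_n^\pm := \twr(\map{\psi_n}, F_n^\pm, h_n^\pm)$ have size $\geq \mu_n$, precision less than $\mu_{\twr_n^\pm}/(2(h_n^+)^2)$, and are $\beta$-monochromatic with respect to $\Pa_n$. Since $\Pa_n$ refines each coarser $\Pa_k$ and $\beta \leq \delta_{k,n-k}$, they are also $\delta_{k,n-k}$-monochromatic with respect to every $\Pa_k$ for $0 \leq k \leq n$. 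Properties (2) and (3) are immediate, and $\psi_n \in \MixColl$ follows from the last assertion of Proposition \ref{thm: prop_polynomials} applied to $\psi_n = \varphi_0 + P_0 + \sum_{l=1}^n (P_{\mu_l, m_l} - X_{m_l})$, giving (1).

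\textbf{Choice of $\epsilon_n$.} The continuity of $\psi \mapsto (\map{\psi}, \meas{\psi})$ in the $C^0$ topology makes each of the finitely many conditions in (4) at the finite range of levels $1 \leq l \leq n$, $0 \leq k \leq l$ an open condition on $\psi$: namely that $\twr(\map{\psi}, F_l^\pm, h_l^\pm)$ is a Rokhlin tower of size at least $l^{-1/4}$ and precision below $\mu_{\twr_l}/(2(h_l^+)^2)$, and is $\delta_{k,l-k}$-monochromatic with respect to $\Pa_k$. Openness of the monochromaticity part is precisely Proposition \ref{thm: open_property}, whose hypothesis on $\Pa$ is met because each $\Pa_k$ consists of open cubes together with a Lebesgue-null boundary. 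Choosing $\epsilon_n \leq \epsilon_{n-1}$ strictly below the intersection of these finitely many openness radii then guarantees that every hypothesis of Proposition \ref{thm: LB_criterion_towers_product} holds on the entire $C^0$-ball of radius $\epsilon_n$ around $\psi_n$, so the numbers supplied by that proposition may be taken independent of $\psi$ throughout this ball, defining $N_{k,i}$ for $k+i = n+1$ and closing the induction.

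\textbf{Main obstacle.} The delicate point is exactly this simultaneous $C^0$-stability: the single scalar $\epsilon_n$ must control, uniformly in $\psi$, every piece of tower-theoretic data inherited from the previous $n$ steps. Proposition \ref{thm: open_property} handles the monochromaticity part, while continuity of the normalization (\ref{eq: normalization}) handles Rokhlin tower existence, size, and precision; pooling all of the resulting openness radii yields a single admissible $\epsilon_n$ at each stage, which is precisely what the lemma's statement requires to make the $C^0$ limit $\psi = \lim \psi_n$ inherit the full tower-theoretic picture needed to apply Theorem \ref{thm: LB_criterion_transformation_product}.
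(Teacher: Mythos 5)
Your proposal follows essentially the same route as the paper: induction on $n$, with Proposition \ref{thm: polynomial_perturbation} applied to the accumulated zero-mean polynomial $\psi_{n-1}-\varphi_0$ to produce $P_n$ and the new pair of towers, mixing via Proposition \ref{thm: prop_polynomials} / \ref{thm: mixing_density}, and $\epsilon_n$ chosen by the openness of the finitely many tower conditions (closed floor sets, Proposition \ref{thm: open_property}, continuity of the normalization) so that the constants $N_{k,i}$ persist on the whole $C^0$-ball. The only deviations are cosmetic bookkeeping: you take $N=\bar N_n$ and $\beta=\min_k\delta_{k,n-k}$ where the paper takes $N=\tfrac{2}{\delta_n}\max_k N_{k,n-k}$ and $\beta=\delta_n$, and you attribute the openness of monochromaticity to Proposition \ref{thm: open_property} (which is literally about the $P(\Pa,\alpha,\delta,n)$ property rather than $\Delta(\twr,\Pa)$), but neither affects the validity of the argument.
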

 Let us assume that the Lemma has been proven and let
 \[ \psi = \lim_{n \rightarrow \infty} \psi_n,\]
 which is a well defined $C^\infty$ function obeying
 \[ \| \varphi - \psi\|_{C^r} < \epsilon.\]
 We can suppose WLOG that $\psi \in C^\infty_+(\T^2)$. By Proposition \ref{thm: prop_polynomials}, $\psi \in \MixColl$ and thus $\transf{\psi}$ is mixing. By (\ref{thm_hypotheses}) in the Lemma, the transformation $\map{\psi}$ and the collection $\{\twr_n^\pm\}_{n \geq 1}$ verify the hypotheses of Theorem \ref{thm: LB_criterion_transformation_product}. Furthermore, by (\ref{height_condition_verified}) and (\ref{thm_hypotheses}) in the Lemma and by the Remark in Theorem \ref{thm: LB_criterion_transformation_product} it follows that $\cartesian{\psi}$ is loosely Bernoulli. 
 \end{proof}
 
 \begin{proof}[Proof Lemma \ref{thm: recursive_construction}] We prove the Lemma by induction. Define $\epsilon_0 = \frac{\epsilon}{4}$, $N_{0, 0} = 0$ and let $n \geq 1$. Suppose $N_{k, i}$ has been defined for all $0 \leq k < n,$ $0 \leq i < n - k$ and that the other sequences in the statement have been defined up to their $(n - 1)$-th term. For all $0 \leq k \leq n$ define
\[N_{k, n - k} = N_{n - k}(\map{\psi_{n - 1}}, \Pa_k, \twr(\map{\psi_{n - 1}}, F_k^\pm, h_k^\pm), \dots, \twr(\map{\psi_{n - 1}}, F_{n - 1}^\pm, h_{n - 1}^\pm))\]
with $N_{n - k}$ as in Proposition \ref{thm: LB_criterion_towers_product}. 
 Applying Proposition \ref{thm: polynomial_perturbation} with 
\[ P = P_0 + \dots + P_{n - 1}, \hone \Pa = \Pa_n, \hone \beta = \delta_n, \] 
\[\mu = \mu_{n}, \hone \epsilon = \epsilon_{n - 1}, \hone N = \frac{2}{\delta_{n}} \max_{0 \leq k \leq n} N_{k, n - k}\]
there exists $m_{n } \in \N$ such that for $P_{n} = P_{\mu_{n}, m_{n}} - X_{m_{n}}$ 
 \[ \psi_{n} = \psi_{n - 1} + P_{n} \in C^\infty_+(\T^2), \hone \| P_{n} \|_{C^{r + n}} < \epsilon_{n - 1},\]
and the system $\transf{\psi_{n}}$ admits Rokhlin towers $\twr^\pm = \twr(\map{\psi_n}, F_{n}^\pm, h_n^\pm)$, $\delta_n$-monochromatic with respect to $\Pa_n$, with closed floor sets such that
 \[ h_n^+ = h_n^- + 2 > N, \hone \mu_{\twr^\pm} > \mu_{n}, \hone \rho_{\twr^\pm} < \frac{\mu_{n}}{2(h^{+}_{_n})^2}.\]
By Proposition \ref{thm: mixing_density}, $\psi_n$ satisfies the first assertion. By construction the second and third assertion also hold for $P_n$ and $h_n^\pm$ respectively. Thus, it remains to define $\epsilon_n$ such that the fourth assertion holds. Let $0 \leq k < l \leq n $. Since
\[ \|\psi_n - \psi_{n - 1} \|_{C^0} < \epsilon_{n - 1}\]
it follows from (\ref{thm_hypotheses}) in the Lemma that
\[N_{k, n - k} = N_{n - k}(\map{\psi_{n}}, \Pa_k, \twr(\map{\psi_{n}}, F_k^\pm, h_k^\pm), \dots, \twr(\map{\psi_{n}}, F_{n - 1}^\pm, h_{n - 1}^\pm)).\]
By (\ref{thm_hypotheses}) in the Lemma, if $k < n - 1$ the transformation $(\map{\psi_n}, \Pa_{k})$ and the collection $\mathcal{T} = \{ \twr(\map{\psi_n}, F_i^\pm, h_i^\pm)\}_{k \leq i < n}$ verify the hypotheses of Proposition \ref{thm: LB_criterion_towers_product}. By construction, if $k = n - 1$ then $(\map{\psi_n}, \Pa_{n - 1})$ and $\mathcal{T} = \{ \twr(\map{\psi_n}, F_n^\pm, h_n^\pm)\}$ verify the hypotheses of Proposition \ref{thm: LB_criterion_towers_product}. By Proposition \ref{thm: open_property} and by continuity, there exist 
\[0 < \epsilon_{n} \leq \frac{\epsilon}{2^{n + 2}}\]
such that the conclusions in (\ref{thm_hypotheses}) hold for any $\psi \in \MixColl$ satisfying 
\[\| \psi - \psi_n\| \leq \epsilon_n.\]
This completes the proof.
\end{proof}

\section{Appendix}

\begin{lem}
\label{thm: lemma_cohomological}
Let $\omega \in \R^d$ such that $(1, \omega)$ is non-resonant. Given a trigonometric polynomial $P: \T^d \rightarrow \R$ with zero average and of degree $n \geq 1$ there exists a trigonometric polynomial $Q$ of degree $n$ such that
\begin{equation} 
\label{eq: cohomological}
P = Q \circ R_\omega - Q,
\end{equation}
where $R_\omega: \T^d \rightarrow \T^d$ denotes the toral translation of rotation vector $\omega$. In particular, the Birkhoff sums of $P$ with respect to $R_\omega$ obey
\begin{equation}
\label{eq: bound_birkhoff_sums}
\left\| S^{R_\omega}_m P \right\|_{C^r} \leq \min \left\{ 2\| Q\|_{C^r}, \vvvert m\omega\vvvert \| Q\|_{C^{r + 1}} \right\}
\end{equation}
 for all $m \geq 1.$ Furthermore, there exists a constant $C(d, r)$ such that
\[\| Q\|_{C^r} \leq C \| P\|_{C^{d + r + 1}} \max_{0 < |k|_1 \leq n} \vvvert \langle k, \omega \rangle \vvvert ^{-1}.\]

\end{lem}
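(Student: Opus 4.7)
The plan is to construct $Q$ explicitly in Fourier coefficients and then derive both norm estimates from standard harmonic analysis on $\T^d$.

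First I would write $P$ as a finite Fourier series
\[ P(x) = \sum_{0 < |k|_1 \leq n} \hat{P}(k)\, e^{2\pi i \langle k, x\rangle}, \]
using that $P$ has zero average (so no $k=0$ term). Seeking $Q$ in the form $Q(x) = \sum_{0 < |k|_1 \leq n} \hat{Q}(k)\, e^{2\pi i \langle k, x\rangle}$, the equation $P = Q \circ R_\omega - Q$ becomes, coefficient by coefficient, $\hat{Q}(k)\bigl(e^{2\pi i \langle k, \omega\rangle} - 1\bigr) = \hat{P}(k)$. Since $(1, \omega)$ is non-resonant, $\langle k, \omega\rangle \notin \Z$ for $0 < |k|_1 \leq n$, so $\hat{Q}(k) := \hat{P}(k) / (e^{2\pi i \langle k, \omega\rangle} - 1)$ is well defined and $Q$ is a trigonometric polynomial of degree at most $n$.

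Next I would derive the $C^r$ bound on $Q$. Using the elementary estimate $|e^{2\pi i t} - 1| \geq 4 \vvvert t \vvvert$ one has
\[ |\hat{Q}(k)| \leq \frac{|\hat{P}(k)|}{4\vvvert \langle k, \omega\rangle\vvvert}. \]
Combining this with the standard Fourier-coefficient decay $|\hat{P}(k)| \leq C_1(d) \,\|P\|_{C^{d+r+1}} / (1+|k|_\infty)^{d+r+1}$ and differentiating term by term gives
\[ \|Q\|_{C^r} \leq C_2(d, r)\, \|P\|_{C^{d+r+1}} \max_{0 < |k|_1 \leq n} \vvvert \langle k, \omega\rangle\vvvert^{-1} \sum_{k \in \Z^d \setminus \{0\}} \frac{(1+|k|_1)^r}{(1+|k|_\infty)^{d+r+1}}, \]
and the last sum converges to a constant depending only on $d$ and $r$, giving the stated bound.

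Finally, for the Birkhoff sum inequality, the cohomological equation telescopes to $S_m^{R_\omega} P = Q \circ R_\omega^m - Q$. The triangle inequality immediately yields $\|S_m^{R_\omega} P\|_{C^r} \leq 2\|Q\|_{C^r}$. For the second part of the minimum, I would apply the mean value theorem to $Q\circ R_\omega^m - Q$: the two evaluation points differ by the vector $m\omega$ modulo $\Z^d$, so their distance on $\T^d$ is at most $\vvvert m\omega\vvvert$, and the same holds for the $C^r$-norm after bounding by $\|Q\|_{C^{r+1}}$ times this distance. Taking the minimum of the two bounds gives \eqref{eq: bound_birkhoff_sums}.

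There is no real obstacle here beyond bookkeeping the combinatorial constants $C(d,r)$; the only point requiring slight care is the convergence of the Fourier sum, which is why the norm on the right-hand side must be $\|P\|_{C^{d+r+1}}$ (the extra $d+1$ derivatives ensure absolute summability over $\Z^d$ regardless of the degree $n$).
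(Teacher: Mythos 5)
Your proposal is correct and follows essentially the same route as the paper: the explicit Fourier-coefficient solution $\hat Q(k)=\hat P(k)/(e^{2\pi i\langle k,\omega\rangle}-1)$ of the cohomological equation, the lower bound $|e^{2\pi i t}-1|\geq 2\sin(\pi\vvvert t\vvvert)\geq\vvvert t\vvvert$ for the small divisors, the standard comparison between $C^r$ norms and Fourier-coefficient decay (which is exactly where the $d+1$ extra derivatives enter), and the telescoped identity $S_m^{R_\omega}P=Q\circ R_\omega^m-Q$ combined with the triangle inequality and the mean value theorem for the two Birkhoff-sum bounds. No discrepancies worth noting.
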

\begin{proof}
Consider the Fourier expansion of $P$ 
\[ P(\theta) = \sum_{ |k|_1 \leq n} p_k e^{2\pi i k \cdot \theta}\]
and define
\begin{equation}
\label{eq: cohomological_solution}
Q(\theta) = \sum_{ |k|_1 \leq n} \frac{p_k}{e^{2\pi i k \cdot \theta} - 1} e^{2\pi i k \cdot \theta}.
\end{equation}
Then $Q$ satisfies (\ref{eq: cohomological}). Equation (\ref{eq: bound_birkhoff_sums}) is a direct consequence of (\ref{eq: cohomological}). Recall that given $r \in \N$ there exists a constant $C(d, r)$ such that for any $f \in C^\infty(\T^d ,\R)$ with Fourier coefficients $f_k$ we have
\[ C^{-1} |k|_1^{r} |f_k| \leq \| f\|_{C^r} \leq \sup_{k \in \N^d} C|k|_1^{r + d + 1} |f_k|.\]
Since
\[ |e^{2\pi i k \cdot \theta} - 1| \geq 2\sin (\pi \vvvert k \cdot \omega \vvvert) \geq \vvvert k \cdot \omega \vvvert \]
the bound for the $C^r$ norm of $Q$ now follows from (\ref{eq: cohomological_solution}). 
\end{proof}

\textbf{Remark:} For $\omega \in \R \setminus \mathbb{Q}$ with return times $(q_n)_{n \in \N}$
\[q_n \leq \max_{0 < |k|_1 < q_{n}} \vvvert k\omega \vvvert ^{-1} = \vvvert q_{n - 1} \alpha \vvvert^{-1} \leq 2q_{n}.\]

\begin{cor}
\label{thm: bounds_polynomials}
Let $\omega \in \R^d$ such that $(1, \omega)$ is non-resonant. Given $r \in \N$ there exists a constant $C(d, r)$ such that for any trigonometric polynomial $P: \T^d \rightarrow \R$ with zero average and for all $m \geq 1$
\[ \left\| S^{R_\omega}_m P \right\|_{C^r} \leq C\| P\|_{C^{r + d + 1}} \vvvert m \omega \vvvert \max_{0 < |k|_1 < \deg(P)} \vvvert \langle \omega, k \rangle \vvvert^{-1} \]
In particular, if $d = 1$, $\omega \in \R \setminus \mathbb{Q}$ with return times $(q_n)_{n \in \N}$ and $\deg(P) < q_{m_0}$ then
\[ \left\| S^{R_\omega}_{q_{m - 1}} P \right\|_{C^r} \leq 2C\frac{q_{m_0}}{q_m}\| P\|_{C^{r + 2}}\]
 for all $m \geq 1$.
\end{cor}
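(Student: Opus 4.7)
The plan is to bootstrap directly off Lemma \ref{thm: lemma_cohomological}, which already does the heavy lifting. Given $P$ trigonometric with zero average, the lemma produces $Q$ solving the cohomological equation $P = Q \circ R_\omega - Q$, together with the norm estimate
\[ \|Q\|_{C^r} \leq C(d,r) \|P\|_{C^{d+r+1}} \max_{0 < |k|_1 \leq \deg(P)} \vvvert \langle k, \omega \rangle \vvvert^{-1}. \]
The telescoping identity then gives $S^{R_\omega}_m P = Q \circ R_{m\omega} - Q$, so the Birkhoff sum is itself a coboundary with shift of size $\vvvert m\omega \vvvert$.

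From there, the key observation is that translating a $C^{r+1}$ function by $m\omega$ and subtracting gives a $C^r$ gain proportional to $\vvvert m\omega \vvvert$. Concretely, for any derivative $\partial^\alpha$ with $|\alpha| \leq r$, the mean value theorem applied coordinatewise yields
\[ \|\partial^\alpha (Q \circ R_{m\omega} - Q)\|_{C^0} \leq \vvvert m\omega \vvvert \, \|Q\|_{C^{r+1}}, \]
and summing (or taking sup) across multi-indices gives $\|S^{R_\omega}_m P\|_{C^r} \leq \vvvert m\omega \vvvert \|Q\|_{C^{r+1}}$, which is precisely the second term appearing in the bound (\ref{eq: bound_birkhoff_sums}) of the lemma. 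Substituting the norm estimate for $Q$ (applied at level $r+1$, up to adjusting the constant $C$) immediately yields the first inequality of the corollary.

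For the second statement, I would specialize to $d = 1$ and use two standard facts about the convergents $(q_n)_{n \in \N}$ of an irrational: first, the Remark stated just before the corollary, which says
\[ \max_{0 < |k| < q_{m_0}} \vvvert k\omega \vvvert^{-1} = \vvvert q_{m_0 - 1}\omega \vvvert^{-1} \leq 2q_{m_0}, \]
and second, the basic inequality $\vvvert q_{m-1}\omega \vvvert \leq 1/q_m$ (which is the defining property of first return times). Plugging $m \leftarrow q_{m-1}$ into the first inequality of the corollary produces the factor $\vvvert q_{m-1}\omega \vvvert \leq 1/q_m$, while the hypothesis $\deg(P) < q_{m_0}$ lets the Remark bound the small-divisor factor by $2 q_{m_0}$, yielding the advertised $2C\, q_{m_0}/q_m$ prefactor.

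There is no real obstacle here: the proof is essentially bookkeeping built on top of Lemma \ref{thm: lemma_cohomological}. The only minor care point is to make sure the regularity indices line up — one must invoke the lemma's norm bound at level $r+1$ so that the $\vvvert m\omega \vvvert$ gain is extracted in $C^r$, and then absorb the resulting $\|P\|_{C^{r+d+2}}$ into the stated $\|P\|_{C^{r+d+1}}$ by renaming the constant $C(d,r)$, which is what the corollary's formulation implicitly allows.
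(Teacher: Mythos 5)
Your route is exactly the one the paper intends: the corollary is stated as an immediate consequence of Lemma \ref{thm: lemma_cohomological}, obtained by combining the coboundary representation $S^{R_\omega}_m P = Q\circ R_{m\omega}-Q$, the bound $\|S^{R_\omega}_m P\|_{C^r}\le \vvvert m\omega\vvvert\,\|Q\|_{C^{r+1}}$ from (\ref{eq: bound_birkhoff_sums}), and the $C^{r+1}$ norm estimate on $Q$; the specialization to $d=1$ via the Remark ($\max_{0<|k|<q_{m_0}}\vvvert k\Omega\vvvert^{-1}\le 2q_{m_0}$) together with $\vvvert q_{m-1}\Omega\vvvert\le 1/q_m$ is also correct.

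The one step that does not work as written is the final move of ``absorbing $\|P\|_{C^{r+d+2}}$ into $\|P\|_{C^{r+d+1}}$ by renaming $C(d,r)$.'' There is no constant $C(d,r)$ for which $\|P\|_{C^{r+d+2}}\le C\|P\|_{C^{r+d+1}}$ uniformly over all trigonometric polynomials: for $P(\theta)=\cos(2\pi k\cdot\theta)$ the ratio of the two norms grows like $|k|_1$, so the constant would have to depend on $\deg(P)$. The honest output of your computation is the estimate with $\|P\|_{C^{r+d+2}}$ on the right-hand side (equivalently, the corollary as stated holds with $r$ replaced by $r-1$). This off-by-one is present in the paper's own statement, which gives no proof of the corollary; the extra derivative is harmless everywhere the corollary is invoked, since the roof functions are $C^\infty$ and the relevant quantities ($q_{m_0}/q_m$, etc.) decay super-exponentially, so the correct fix is to adjust the regularity index rather than to hide the discrepancy in the constant. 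Note also that estimating directly on the Fourier side, via $(\widehat{S_mP})_k = p_k\,(e^{2\pi i m k\cdot\omega}-1)/(e^{2\pi i k\cdot\omega}-1)$ and $|e^{2\pi i m k\cdot\omega}-1|\le 2\pi |k|_1\vvvert m\omega\vvvert$, costs the same extra factor $|k|_1$ and hence the same one derivative, so the loss is intrinsic and not an artifact of using the mean value theorem.
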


The construction in Proposition \ref{thm: prop_polynomials} is analogous of the polynomials families defined in \cite[Proposition 3.5]{fayad_smooth_2006} and in \cite[Proposition 4]{fayad_continuous_2019}. The proof of the last part of Proposition \ref{thm: prop_polynomials} is completely analogous to the proofs of \cite[Theorem 3.1]{fayad_smooth_2006} and \cite[Theorem 3]{fayad_continuous_2019} which rely on the mixing criterion in Proposition \ref{thm: mixing_criterion}. We will briefly sketch the proofs here. For more details we refer the interested reader to \cite{fayad_analytic_2002}, \cite{fayad_smooth_2006}, \cite{fayad_continuous_2019}. 

\begin{proof}[Proof Proposition \ref{thm: prop_polynomials}]
Let $r, n \in \N$ and $0 < \mu < \frac{1}{4}$. Let $\widetilde{\xi}_n: \R \rightarrow \R$ be the $\frac{1}{q_n}$ periodic odd function defined by
\begin{equation*}
\widetilde{\xi}_n(x) = \left\{ 
\begin{array}{ccc} 
x & \text{ for } & x \in \big[0, \frac{1}{4q_n} - \frac{\mu}{q_n} \big] \Big. \\
 \frac{1 - 4\mu}{4q_n} & \text { for } & \big|x - \frac{1}{4q_n}\big| < \frac{\mu}{q_n} \Big.\\
 -x + \frac{1}{2q_n} & \text{ for } & \big[\frac{1}{4q_n} + \frac{\mu}{q_n}, \frac{1}{2q_n} \big] \Big.
 \end{array}
\right. .
\end{equation*}
Fix 
\[\frac{q_ne^{-q_n}}{2} \leq \eta_n \leq \frac{3q_ne^{-q_n}}{4}\]
 such that $\frac{1}{\eta_n} \in 2q_n\N$ (which is always possible for $n$ sufficiently large) and define
\[ \xi_n(x) = \frac{4q_n\eta_n}{1 - 4\mu} \widetilde{\xi}_n(x).\]
Let $K: \R \rightarrow \R$ be an even, positive, smooth bump function obeying 
\[Supp(K) \subset (-1, 1), \hspace{1cm} \int_\R K (x) dx = 1.\]
 Define $K_n(x) = n^2 q_n K(n^2q_n)$. Clearly 
 \[ Supp(K_n) \subset \left(\frac{-1}{n^2q_n}, \frac{1}{n^2q_n}\right), \hspace{1cm} \int_\R K_n(x) dx = 1. \]
Let 
\[ X = K_n \ast \xi_n\]
and notice that $X$ is a well defined $\tfrac{1}{q_n}$ periodic odd function. Define $P_{\mu, n}$ as the truncation of the Fourier series of $X = K_n \ast \xi_n$ to degree $q_{n+1} - 1$, namely
\[ P_{\mu, n}(x) = \sum_{k = -q_{n+1}+1}^{q_{n+1}-1} \widehat{X}(k) e^{2\pi i k x}.\]
Let us denote 
\[I = \left\{ x \in \T \, \Big| \, \big|\{q_nx\} + \tfrac{1}{4} \big| < 3\mu \right\}, \hone I' = \left\{ x \in \T \, \Big| \, \big|\{q_nx\} + \tfrac{3}{4}\big| < 3\mu \right\},\]
\[ J = \left\{ x \in \T \, \Big| \, \big| \, \{q_nx\} + \tfrac{1}{4} \big| > 4\mu \right\}, \hone J' = \left\{ x \in \T \, \Big| \, \big| \, \{q_nx\} + \tfrac{3}{4} \big| > 4\mu \right\}.\]
Notice that the sets $I \cup I'$ and $J \cup J'$ correspond to the points satisfying the inequalities in assertions (3) and (4) respectively. We have
\[ X|_I = \eta_n, \hone X|_{I'} = -\eta_n,\]
\[ X'|_J = \frac{4q_n\eta_n}{1 - 4\mu} , \hone X'|_{J'} = - \frac{4q_n\eta_n}{1 - 4\mu}.\]
In particular
\[ \big| X'|_{J \cup J'} \big| \geq 2q_n^2e^{-q_n}.\]
 By definition of $X$
\begin{align*}
\| X\|_{C^r} & \leq \eta_n (n^{2}q_n)^{r+1}\| K\|_{C^r} \\
& \leq n^{2r+1}q_n^{r+2}e^{-q_n}\| K\|_{C^r} \\
& \leq e^{-4q_n / 5}
\end{align*}
for sufficiently large $n$. Furthermore
\begin{align*}
 \| X - P_{\mu, n}\|_{C^r} & \leq \sum_{|k| \geq q_{n+1}} (2\pi k)^r |\widehat{X}(k)| \\
& \leq \dfrac{1}{2\pi} \sum_{|k| \geq q_{n+1}} \dfrac{\| X\|_{C^{r+1}}}{k^2} \\
& \leq \frac{e^{3q_n/4}}{q_{n+1}}
\end{align*}
for $n$ sufficiently large. Therefore conditions (2)-(4) are verified by $P_{\mu, n}$ for $n$ sufficiently large. It remains to show that the limit function belongs to $\MixColl$. We do this by applying the mixing criterion in Proposition \ref{thm: mixing_criterion}. 
We check only the first part of Proposition \ref{thm: mixing_criterion} as the second part is verified exactly as in \cite{fayad_analytic_2002}, \cite{fayad_smooth_2006}, \cite{fayad_continuous_2019}. We can express $\psi$ as 
\[ \psi(x, y) = 1 + P(x, y) + \sum_{n = 1}^{+\infty} P_n(x) + e^{-q_n'}\cos(2\pi q_n'y)\]
where $P_n(x)$ is either equal to $e^{-q_n}\cos(2\pi q_nx)$ or to some polynomial $P_{\mu_n, q_n}$, of degree less than $q_{n+1}$, given by Proposition \ref{thm: prop_polynomials} for some $\mu_n > \frac{1}{n}$. Define 
\[I_n = \left\{ x \in \T \, \left| \, \{q_nx\} + \tfrac{1}{2} \pm \tfrac{1}{4} > r_n \right. \right\}\]
where 
\[ 
r_n = \left\{ \begin{array}{lc}
5\mu & \text{ if } P_n = P_{\mu_n, q_n}, \\
n^{-1} & \text{ otherwise. }
\end{array} \right. 
\]
 Let $m \in [e^{2q_n}/2, 2e^{2q_n'}]$ and suppose $\deg(P) < q_{n - 1}$. Decompose 
 \[\psi = 1 + L_n + P_n + H_n\]
 in terms of order less or equal than $q_{n - 1}$, of degree $q_n$, and of order bigger or equal than $q_{n+1}$ respectively. By Corollary \ref{thm: bounds_polynomials} applied to $L_n$ there exists a positive constant $C$ such that 
 \[ \|\partial_x S_m(\psi - P_n)\|_{\T^2} \leq Cq_{n} + 2e^{-q_n'} \leq \dfrac{m}{q_n} \]
for sufficiently large $n$. If $P_n = e^{-q_n}\cos(2\pi q_nx)$ it follows from \cite{fayad_analytic_2002} that 
\[ |\partial_xS_mP_n(x)| \geq 16\dfrac{mq_n}{ne^{q_n}},\]
for all $x \in I_n$. If $P_n = P_{\mu_n, q_n}$, for all $x \in I_n$ and all $0 \leq l \leq 2e^{q_n'}$ 
\[ \left|\{ q_n(x + l\Omega)\} + \frac{1}{2} \pm \frac{1}{4}\right| > 5\mu - \frac{l}{q_{m+1}} > 4\mu\]
for sufficiently large $n$. Thus by condition (4) of Proposition \ref{thm: prop_polynomials}
\[ |\partial_xS_{m}P_n(x)| \geq \dfrac{mq_n^2}{e^{q_n}} \]
for all $x \in I_n$. In both cases the first part of Proposition \ref{thm: mixing_criterion} is verified. Thus $\psi \in \MixColl$. 
\end{proof}

\section*{Acknowledgments}

The author would like to thank Jean-Paul Thouvenot for his encouragement and many helpful discussions. 

\bibliographystyle{acm}
\bibliography{LooselyBernoulli.bib}

\end{document}